\DeclareFontFamily{U}{mathx}{\hyphenchar\font45}
\DeclareFontShape{U}{mathx}{m}{n}{
	<5> <6> <7> <8> <9> <10>
	<10.95> <12> <14.4> <17.28> <20.74> <24.88>
	mathx10
}{}
\DeclareSymbolFont{mathx}{U}{mathx}{m}{n}
\DeclareMathAccent{\widecheck}{0}{mathx}{"71}
\DeclareMathAccent{\wideparen}{0}{mathx}{"75}
\setlist[enumerate]{leftmargin=1.5em}
\setlist[itemize]{leftmargin=1.5em}
\definecolor{green}{rgb}{0,0.8,0} 
\newtheorem{maintheorem}{Theorem}
\newtheorem{theorem}{Theorem}[section]
\newtheorem{lemma}[theorem]{Lemma}
\theoremstyle{definition}
\theoremstyle{remark}
\newtheorem{remark}[theorem]{Remark}
\numberwithin{equation}{section}
\newcommand{\nrm}{\@ifstar{\nrmb}{\nrmi}}
\newcommand{\nrmi}[1]{\Vert{#1}\Vert}
\newcommand{\nrmb}[1]{\left\Vert{#1}\right\Vert}
\newcommand{\abs}{\@ifstar{\absb}{\absi}}
\newcommand{\absi}[1]{\vert{#1}\vert}
\newcommand{\absb}[1]{\left\vert{#1}\right\vert}
\newcommand{\brk}{\@ifstar{\brkb}{\brki}}
\newcommand{\brki}[1]{\langle{#1}\rangle}
\newcommand{\brkb}[1]{\left\langle{#1}\right\rangle}
\newcommand{\set}{\@ifstar{\setb}{\seti}}
\newcommand{\seti}[1]{\{#1\}}
\newcommand{\setb}[1]{\left\{ #1\right\}}
\newcommand{\nnrm}[1]{{\vert\kern-0.25ex\vert\kern-0.25ex\vert #1 
    \vert\kern-0.25ex\vert\kern-0.25ex\vert}}
\newcommand{\VERT}[1]{{\left\vert\kern-0.25ex\left\vert\kern-0.25ex\left\vert #1 
    \right\vert\kern-0.25ex\right\vert\kern-0.25ex\right\vert}}
\let\Re\relax
\DeclareMathOperator{\Re}{Re}
\newcommand{\aleq}{\lesssim}
\newcommand{\lap}{\Delta}
\newcommand{\ud}{\mathrm{d}}
\newcommand{\rd}{\partial}
\newcommand{\nb}{\nabla}
\newcommand{\alp}{\alpha}
\newcommand{\bt}{\beta}
\newcommand{\gmm}{\gamma}
\newcommand{\Gmm}{\Gamma}
\newcommand{\dlt}{\delta}
\newcommand{\eps}{\epsilon}
\newcommand{\kpp}{\kappa}
\newcommand{\Lmb}{\Lambda}
\newcommand{\tht}{\theta}
\newcommand{\omg}{\omega}
\newcommand{\Omg}{\Omega}
\newcommand{\bbR}{\mathbb R}
\newcommand{\bbT}{\mathbb T}
\newcommand{\ttht}{\tilde{\tht}}				
\newcommand{\normif}[1]{{\left\Vert #1 \right\Vert}_{L^{\infty}}}
\newcommand{\normb}[1]{{\left\Vert #1 \right\Vert}_{L^2}}
\newcommand{\normbr}[1]{{\left\Vert #1 \right\Vert}_{L^2\left (\mathbb{R}^2\right)}}
\newcommand{\normhs}[1]{{\left\Vert #1 \right\Vert}_{H^s}}
\newcommand{\normhsps}[1]{{\left\Vert #1 \right\Vert}_{H^{s+1}}}
\newcommand{\normhsr}[1]{{\left\Vert #1 \right\Vert}_{H^{s}(\mathbb{R}^2)}}
\begin{document}

\title{Well-posedness for Ohkitani model and long-time existence for surface quasi-geostrophic equations}
\author{Dongho Chae\thanks{Department of Mathematics, Chung-Ang University University.  E-mail: dchae@cau.ac.kr} \and In-Jee Jeong\thanks{Department of Mathematical Sciences and RIM, Seoul National University and School of Mathematics, Korea Institute for Advanced Study.  E-mail: injee\_j@snu.ac.kr}\and Jungkyoung Na\thanks{Department of Mathematics, Brown University.  E-mail: jungkyoung\_na@brown.edu}\and Sung-Jin Oh\thanks{Department of Mathematics, UC Berkeley and School of Mathematics, Korea Institute for Advanced Study. E-mail: sjoh@math.berkeley.edu}}

\date{\today}



\maketitle


\begin{abstract}
	We consider the Cauchy problem for the logarithmically singular surface quasi-geostrophic (SQG) equation, introduced by Ohkitani, \begin{equation*}
	\begin{split}
	\rd_t \tht - \nb^\perp \log(10+(-\lap)^{\frac12})\tht \cdot \nb \tht = 0 ,
	\end{split}
	\end{equation*} and establish local existence and uniqueness of smooth solutions in the scale of Sobolev spaces with exponent decreasing with time.  {Such a decrease of the Sobolev exponent is necessary, as we have shown in the companion paper \cite{CJO1} that the problem is strongly ill-posed in any fixed Sobolev spaces.} The time dependence of the Sobolev exponent can be removed when there is a dissipation term strictly stronger than log. These results improve wellposedness statements by Chae, Constantin, C\'{o}rdoba, Gancedo, and Wu in \cite{CCCGW}. 
	This well-posedness result can be applied to describe the long-time dynamics of the $\dlt$-SQG equations, defined by 
	\begin{equation*}
	\begin{split}
	\rd_t \tht + \nb^\perp (10+(-\lap)^{\frac12})^{-\dlt}\tht \cdot \nb \tht = 0,
	\end{split}
	\end{equation*} for all sufficiently small $\dlt>0$ depending on the size of the initial data. For the same range of $\dlt$, we establish global well-posedness of smooth solutions to the dissipative SQG equations. 
\end{abstract}

\section{Introduction}

\subsection{Main results}

In this paper, we are concerned with the \textit{logarithmically singular} surface quasi-geostrophic (SQG) equations: 
\begin{equation}\label{eq:sqg-log}
\left\{
\begin{aligned}
	&\rd_{t} \tht + u \cdot \nb \tht = 0, \\
	&u = -\nb^{\perp} \log(10 + \Lmb) \tht,
\end{aligned}
\right.
\end{equation}
where $\nb^{\perp} = (-\rd_{x_2}, \rd_{x_1})^{\top}$, $\Lmb= (-\lap)^{\frac12}$, $\tht(t,\cdot): \Omg\rightarrow \bbR$ and $u(t,\cdot):\Omg\rightarrow \bbR^2$ with $\Omg=\bbT^2$, $\bbR^2$, or $\bbT\times\bbR$. Note that \eqref{eq:sqg-log} is an inviscid nonlinear transport equation and the velocity is strictly more singular than the advected scalar. The system \eqref{eq:sqg-log} has been introduced by Ohkitani (\cite{Oh1,Oh2}) and was referred to as the \textit{Ohkitani model} later in \cite{CCCGW}. Our first main result gives a local well-posedness result in the scale of Sobolev spaces with an exponent \textit{decreasing} with time.
\begin{maintheorem}\label{thm:wp-log}
	For any $s_0>4$ and $\tht_0\in H^{s_0}(\Omg)$, there exist $T = T(s_0,\nrm{\tht_0}_{H^{s_0}})>0$, a continuous decreasing function $s(t)>4$ with $s(0)=s_0$ defined in $t\in[0,T]$, and a solution $\tht\in L^\infty([0,T];H^{s(t)}(\Omg))$ to \eqref{eq:sqg-log} with initial data $\tht_0$ satisfying \begin{equation*}
		\begin{split}
			\nrm{\tht(t,\cdot)}_{H^{s(t)}(\Omg)} \le C\nrm{\tht_0}_{H^{s_0}(\Omg)}.
		\end{split}
	\end{equation*} The solution is unique in the class $L^\infty([0,T];H^4(\Omg))$. 
\end{maintheorem}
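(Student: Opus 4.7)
The plan is to construct smooth solutions via a regularized problem (e.g., apply a frequency cutoff $P_{\le N}$ to the velocity in \eqref{eq:sqg-log}) so that smooth global-in-time approximants $\tht^{(N)}$ exist, and then derive an a priori $H^{s(t)}$ bound uniform in $N$ by exploiting the freedom to let the Sobolev exponent $s(t)$ decrease in time. The key identity is
\begin{equation*}
	\frac{d}{dt}\nrm{\tht}_{H^{s(t)}}^{2} = \dot s(t)\int \log(1+|\xi|^{2})(1+|\xi|^{2})^{s(t)}|\wht{\tht}(\xi)|^{2}\,d\xi + 2\langle(1+\Lambda^{2})^{s(t)}\rd_{t}\tht,\tht\rangle;
\end{equation*}
the first term is a strictly negative \emph{log-dissipation} of strength $|\dot s(t)|$ whenever $\dot s(t)<0$, controlling a quantity comparable to $|\dot s(t)|\,\nrm{\log^{1/2}(10+\Lambda)\tht}_{H^{s(t)}}^{2}$.

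To handle the second term I would use the Kato--Ponce commutator together with the divergence-free property of $u$ to reduce matters to estimates of the form $\nrm{[\Lambda^{s},u\cdot\nb]\tht}_{L^{2}}\aleq \nrm{\nb u}_{L^{\infty}}\nrm{\tht}_{H^{s}}+\nrm{u}_{H^{s}}\nrm{\nb\tht}_{L^{\infty}}$. Because $u=-\nb^{\perp}\log(10+\Lambda)\tht$, each of $\nrm{\nb u}_{L^{\infty}}$ and $\nrm{u}_{H^{s}}$ involves $\log(10+\Lambda)$ applied to derivatives of $\tht$; the hypothesis $s_{0}>4$ gives room to control $\nrm{\nb\tht}_{L^{\infty}}$ by a subcritical Sobolev norm via Sobolev embedding, and a logarithmic interpolation inequality such as $\nrm{\log(10+\Lambda) f}_{L^{p}}\aleq \log(10+\nrm{f}_{H^{\sgm}}/\nrm{f}_{L^{p}})\nrm{f}_{L^{p}}$ (with appropriate variants on Besov and $L^{\infty}$ scales) converts each $\log$-symbol into a $\log$ of a Sobolev norm. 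This yields a schematic inequality
\begin{equation*}
	\frac{d}{dt}\nrm{\tht}_{H^{s(t)}}^{2} \le \dot s(t)\,\nrm{\log^{1/2}(10+\Lambda)\tht}_{H^{s(t)}}^{2} + C\log(10+\nrm{\tht}_{H^{s(t)}})\nrm{\tht}_{H^{s(t)}}^{2},
\end{equation*}
and the strategy is to choose $\dot s(t)<0$, with $|\dot s|$ determined by $\nrm{\tht_{0}}_{H^{s_{0}}}$, so that the negative log-dissipation absorbs the log-loss in the nonlinear term. The coupled ODE system for $\nrm{\tht(t)}_{H^{s(t)}}$ and $s(t)$ then closes on some $[0,T]$ with $s(t)>4$ throughout.

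Passage to the limit $N\to\infty$ is standard: uniform $H^{s(t)}$ bounds and the equation yield equicontinuity in $t$ in a weaker space; Aubin--Lions provides strong convergence in $C_{t}H^{s_{1}}_{\mathrm{loc}}$ for some $s_{1}<s(T)$, and the $H^{s(t)}$ bound is preserved by weak-$*$ lower semicontinuity. Uniqueness in $L^{\infty}_{t}H^{4}$ is obtained by an $L^{2}$ difference estimate: $\psi=\tht_{1}-\tht_{2}$ solves $\rd_{t}\psi + u_{1}\cdot\nb\psi = -\nb^{\perp}\log(10+\Lambda)\psi\cdot\nb\tht_{2}$, and pairing with $\psi$, combined with $\nrm{\log(10+\Lambda)\psi}_{L^{2}}\aleq \log(10+\nrm{\psi}_{H^{4}}/\nrm{\psi}_{L^{2}})\nrm{\psi}_{L^{2}}$ and the uniform $H^{4}$ bound, gives a log-Gronwall inequality that closes via an Osgood argument.

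The chief obstacle will be the tightness of the tradeoff between the log-dissipation from $\dot s$ and the log-loss generated by the commutator. Every logarithm appearing in the nonlinear estimate must be paid for against $|\dot s(t)|$, so crude interpolations of $\log(10+\Lambda)$ against Sobolev norms force $s(t)$ to decrease too rapidly and collapse the interval on which $s(t)>4$. The success of the argument therefore rests on a sharp logarithmic interpolation inequality that precisely matches the $\log^{1/2}$-type gain coming from differentiating the Sobolev exponent.
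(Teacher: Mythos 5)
Your high-level strategy --- a time-dependent Sobolev exponent whose decrease produces a logarithmic dissipation $|\dot s(t)|\,\nrm{\log^{1/2}(10+\Lmb)\tht}_{H^{s(t)}}^2$ that absorbs a logarithmic loss in the nonlinear estimate --- is exactly the paper's. But the execution of the nonlinear estimate has a genuine gap: the velocity $u=-\nb^{\perp}\log(10+\Lmb)\tht$ is one full derivative (times a logarithm) \emph{more} singular than $\tht$, so the Kato--Ponce commutator bound you invoke produces the term $\nrm{u}_{H^{s}}\nrm{\nb\tht}_{L^{\infty}}$ with $\nrm{u}_{H^{s}}\sim\nrm{\log(10+\Lmb)\tht}_{H^{s+1}}$. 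That is a loss of a whole derivative, not a logarithm, and no logarithmic interpolation inequality of the form $\nrm{\log(10+\Lmb)f}_{L^{p}}\aleq\log(10+\nrm{f}_{H^{\sgm}}/\nrm{f}_{L^{p}})\nrm{f}_{L^{p}}$ can remove it; such inequalities trade a log-\emph{symbol} for a log of norms but cannot lower the differential order. The log-dissipation coming from $\dot s<0$ can only pay for a $\log^{1/2}$-type excess, so your schematic inequality with the term $C\log(10+\nrm{\tht}_{H^{s(t)}})\nrm{\tht}_{H^{s(t)}}^{2}$ is not what a crude commutator argument yields. The paper closes the estimate only by exploiting two structural cancellations: it expands $|\xi|^{s}=|\eta|^{s}+|\xi-\eta|^{s}+s(\xi-\eta)\cdot\eta|\xi-\eta|^{s-2}+R$ (Lemma~\ref{lem:elementary}) and, in the two dangerous terms where $s$ or $s-1$ powers fall on the velocity factor, symmetrizes $\log^{1/2}(10+\Lmb)$ between the two high-frequency factors (the antisymmetry of $\log^{\frac12}(10+\Lmb)\nb\tht\cdot\nb^{\perp}\log^{\frac12}(10+\Lmb)$, together with $(\xi-\eta)^{\perp}\cdot\eta=\xi^{\perp}\cdot\eta$). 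The surviving difference $\log^{1/2}(10+|\xi-\eta|)-\log^{1/2}(10+|\xi|)$ gains back a full power of frequency, leaving only the logarithmic loss that $\dot s$ can absorb. Without these cancellations the a priori estimate does not close on any interval.

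The same derivative-loss issue undermines your uniqueness argument as written: pairing $-\nb^{\perp}\log(10+\Lmb)\psi\cdot\nb\tht_{2}$ with $\psi$ in $L^{2}$ involves a full gradient of $\log(10+\Lmb)\psi$, which is not controlled by $\nrm{\psi}_{L^{2}}$ or even $\nrm{\log(10+\Lmb)\psi}_{L^{2}}$; one must first symmetrize to cancel the leading antisymmetric part and then control the remaining commutators, which are bounded by $\nrm{\log^{1/2}(10+\Lmb)\psi}_{L^{2}}^{2}$ rather than $\nrm{\psi}_{L^{2}}^{2}$. The paper absorbs this by running the difference estimate with the time-dependent smoothing multiplier $(10+\Lmb)^{-Mt}$, whose time derivative supplies exactly the needed negative $\log^{1/2}$-weighted term; an Osgood argument alone, applied to the plain $L^{2}$ norm, does not see this structure. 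Your regularization/compactness scheme for existence is fine in outline (the paper uses a vanishing-viscosity approximation instead of a frequency cutoff), but the core of the proof is the cancellation structure in the energy estimate, which your proposal is missing.
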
 

\begin{remark}
	One may extend the above well-posedness result to the cases where the multiplier $\log(10 + \Lmb)$ in the Biot--Savart law $u = \nb^{\perp} \log(10 + \Lmb) \tht$ is replaced with less singular radial multipliers, i.e. $\gmm=\gmm(|\xi|)$ satisfying $\gmm(|\xi|)\lesssim \log(10+|\xi|)$ together with a few reasonable assumptions, e.g. $|\rd^\alp_{\xi}\gmm (\xi)| \lesssim_\alp \brk{\xi}^{-|\alp|}|\gmm(\xi)|$ for any multi-index $\alp$. Some natural examples are $\log(10-\lap)$, $\log^{\beta}(10+\Lmb)$ ($\beta<1$), $\log^\alp(10 + \log(10+\Lmb))$ ($\alp>0$) etc.  
\end{remark}

As we shall explain below, the logarithmic Biot--Savart law featured in \eqref{eq:sqg-log} represents the natural borderline in which we can expect a well-posedness theory in Sobolev spaces. Interestingly, Ohkitani arrived at the same model in \cite{Oh1,Oh2} by investigating the collective behavior of the solutions for the generalized SQG equations:
\begin{equation}\label{eq:sqg-delta}
	\left\{
	\begin{aligned}
		&\rd_{t} \tht^{\dlt} + u^{\dlt} \cdot \nb \tht^{\dlt} = 0, \\
		&u^{\dlt} = \nb^{\perp} (10+\Lmb)^{-\dlt} \tht^{\dlt},
	\end{aligned}
	\right.
\end{equation}
when $\dlt>0$ is a parameter. He numerically observed that the sequence $\{ \tht^\dlt \}_{\dlt>0}$ is convergent after a time rescaling. Formally, this can be seen as follows: introducing the new time variable $\tau = \dlt t$, \eqref{eq:sqg-delta} can be written as \begin{equation*}
	\begin{split}
		\rd_\tau \tht^{\dlt} + \nb^\perp \left( \frac{(10+\Lmb)^{-\dlt}-1}{\dlt} \right) \tht^\dlt \cdot \nb \tht^\dlt = 0, 
	\end{split}
\end{equation*} and then from the Taylor expansion, \begin{equation}\label{lim: ohki}
\begin{split}
	\frac{(10+\Lmb)^{-\dlt}-1}{\dlt} \longrightarrow -\log(10+\Lmb)
\end{split}
\end{equation} as $\dlt\to0$. That is, the time-rescaled Biot--Savart law for \eqref{eq:sqg-delta} converges to that for \eqref{eq:sqg-log}. This seems to suggest that the solutions to \eqref{eq:sqg-delta} may converge to those to \eqref{eq:sqg-log}. We make this precise in the following result. 
\begin{maintheorem}\label{thm:long-time dynamics}
	Given any $\dlt\in(0,1)$ and initial data $\tht_0 \in H^{s}(\Omega)$ with $s>5$, there exists a unique solution $\tht^{\dlt}\in L^\infty([0,C^*\dlt^{-1}];H^{s}(\Omega))$ to \eqref{eq:sqg-delta} for some  constant $C^*>0$ depending only on $s$ and $\nrmb{\tht_0}_{H^s}$.
	Moreover, there exists a continuous decreasing function $s(t)>4$ defined in $t\in[0,C^*\dlt^{-1}]$ satisfying
	\begin{equation}\label{beh. of sol.}
		\nrmb{\tht^{\dlt}(t,\cdot)-\tht(\dlt t,\cdot)}_{H^{s(t)}(\Omega)} \le \dlt,
	\end{equation}
	where $\tht$ is the unique local solution to \eqref{eq:sqg-log} with the same initial data $\tht_0$.
\end{maintheorem}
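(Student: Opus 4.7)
The approach is to pass to rescaled time $\tau = \dlt t$ and set $\tld{\tht}^\dlt(\tau, x) := \tht^\dlt(\tau/\dlt, x)$. Since $\nb^\perp \tht^\dlt \cdot \nb \tht^\dlt = 0$, the decomposition $(10+\Lmb)^{-\dlt} = 1 + \dlt\, m_\dlt(\Lmb)$ with $m_\dlt(\xi) := \frac{(10+|\xi|)^{-\dlt}-1}{\dlt}$ casts \eqref{eq:sqg-delta} into
\begin{equation*}
\rd_\tau \tld{\tht}^\dlt + \nb^\perp m_\dlt(\Lmb) \tld{\tht}^\dlt \cdot \nb \tld{\tht}^\dlt = 0.
\end{equation*}
The identity $m_\dlt(\xi) = -\log(10+|\xi|)\cdot \frac{1-e^{-\dlt\log(10+|\xi|)}}{\dlt\log(10+|\xi|)}$ shows $|m_\dlt(\xi)| \le \log(10+|\xi|)$, and differentiating in $\xi$ shows $m_\dlt$ verifies the symbol conditions in the remark following Theorem~\ref{thm:wp-log} uniformly in $\dlt\in(0,1)$. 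Hence the proof of Theorem~\ref{thm:wp-log} applies verbatim to $\tld{\tht}^\dlt$, yielding an existence time $\tau^*>0$ and a continuous decreasing exponent $s_\star(\tau) > 4$ on $[0,\tau^*]$ depending only on $s$ and $\nrmb{\tht_0}_{H^s}$. Undoing the rescaling with $C^* := \tau^*$ gives $\tht^\dlt$ on $[0, C^*\dlt^{-1}]$; uniqueness at each fixed $\dlt > 0$ is standard since $(10+\Lmb)^{-\dlt}$ is strictly less singular than the logarithm.

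For the convergence estimate \eqref{beh. of sol.}, set $w^\dlt := \tld{\tht}^\dlt - \tht$ with $\tht$ the Ohkitani solution from Theorem~\ref{thm:wp-log}. Subtracting the two equations,
\begin{equation*}
\rd_\tau w^\dlt + \nb^\perp m_\dlt(\Lmb) \tld{\tht}^\dlt \cdot \nb w^\dlt + \nb^\perp m_\dlt(\Lmb) w^\dlt \cdot \nb \tht = F^\dlt,
\end{equation*}
with forcing $F^\dlt := -\nb^\perp \bigl(m_\dlt(\Lmb) + \log(10+\Lmb)\bigr)\tht \cdot \nb \tht$. The elementary inequality $0 \le 1 - (1-e^{-y})/y \le y/2$ for $y \ge 0$, evaluated at $y = \dlt\log(10+|\xi|)$, produces
\begin{equation*}
\absb{m_\dlt(\xi) + \log(10+|\xi|)} \aleq \dlt\, \log^2(10+|\xi|),
\end{equation*}
together with analogous bounds on derivatives in $\xi$: the multiplier $m_\dlt(\Lmb) + \log(10+\Lmb)$ costs only a $\log^2$ factor relative to the identity while gaining a factor of $\dlt$. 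Combined with standard product estimates this yields $\nrmb{F^\dlt}_{H^{\tld s}} \aleq \dlt\, \nrmb{\tht}_{H^{\tld s + 1 + \eps}}^2$ for any $\eps > 0$, which is controlled provided $\tld s + 1 + \eps \le s_\star(\tau)$. The hypothesis $s > 5$ (rather than $s > 4$) supplies precisely the extra derivative needed to accommodate the $\nb^\perp$ and the $\log^2$-loss.

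The energy estimate for $w^\dlt$ is then run in a second, slightly lower decreasing-exponent space $H^{\tld s(\tau)}$ with $4 < \tld s(\tau) < s_\star(\tau) - 1 - \eps$, using the same commutator bookkeeping developed for Theorem~\ref{thm:wp-log} to absorb the log-loss from the transport-type term $\nb^\perp m_\dlt(\Lmb) \tld{\tht}^\dlt \cdot \nb w^\dlt$; the term $\nb^\perp m_\dlt(\Lmb) w^\dlt \cdot \nb \tht$ is a linear perturbation handled by Gr\"{o}nwall. The resulting differential inequality $\frac{d}{d\tau}\nrmb{w^\dlt}_{H^{\tld s(\tau)}}^2 \aleq \dlt^2 + C\nrmb{w^\dlt}_{H^{\tld s(\tau)}}^2$, combined with $w^\dlt(0) = 0$ and Gr\"{o}nwall on the bounded interval $[0,C^*]$, gives $\nrmb{w^\dlt(\tau)}_{H^{\tld s(\tau)}} \le \dlt$ after readjustment of constants, which is the claim. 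The main obstacle I anticipate is the simultaneous calibration of the two decreasing exponents $s_\star(\tau)$ and $\tld s(\tau)$: both must stay above $4$ throughout the long interval $[0, C^*]$ while leaving room to absorb the original log-loss \emph{and} the extra $\log^2$-loss from $F^\dlt$, and it is exactly this bookkeeping that forces the initial-regularity threshold up from $s > 4$ in Theorem~\ref{thm:wp-log} to $s > 5$ here.
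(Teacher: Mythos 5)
Your strategy is the paper's own: Taylor-expand the $\dlt$-multiplier around $-\dlt\log(10+\Lmb)$, derive an equation for the difference, and run the energy estimate in a Sobolev scale with decreasing exponent so that the $\dot{s}$-term absorbs logarithmic losses, closing with a Gr\"onwall-type bootstrap over the $O(\dlt^{-1})$ interval. Working in rescaled time $\tau=\dlt t$, and obtaining the long-time existence of $\tht^{\dlt}$ by applying Theorem~\ref{thm:wp-log} uniformly in $\dlt$ to the rescaled equation rather than deducing it a posteriori from the difference bound together with the continuation criterion of \cite{CCCGW}, are cosmetic variations; the bound $|m_\dlt+\log(10+|\cdot|)|\aleq\dlt\log^2(10+|\cdot|)$ is exactly Lemma~\ref{lem:Tay}, and the regularity bookkeeping explaining $s>5$ matches the paper's.

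One step would fail as literally written: the term $\nb^{\perp}m_\dlt(\Lmb)w^{\dlt}\cdot\nb\tht$ is \emph{not} a harmless linear perturbation that Gr\"onwall handles. Its velocity costs $1+\log$ derivatives of $w^{\dlt}$, so its $H^{\tld{s}}$-pairing with $w^{\dlt}$ is not bounded by $C(\tht)\nrm{w^{\dlt}}_{H^{\tld{s}}}^{2}$. This is precisely the paper's term $H_{1}$: one must integrate by parts, split the multiplier as $(I-(10+\Lmb)^{-\dlt})^{1/2}\cdot(I-(10+\Lmb)^{-\dlt})^{1/2}$ and use the divergence-free cancellation plus the commutator bound of Lemma~\ref{lem:Comm} to remove the extra derivative, after which a residual $\nrm{\log^{1/2}(10+\Lmb)w^{\dlt}}_{H^{\tld{s}}}^{2}$ remains that only the decreasing exponent can absorb (see \eqref{est: H1}). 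Conversely, the piece of your transport term with velocity built from $\tht$, namely $\nb^{\perp}m_\dlt(\Lmb)\tht\cdot\nb w^{\dlt}$, is the genuinely lossless one (a plain Kato--Ponce commutator, the paper's $H_{3}$ and \eqref{est: H3}), since $\tht$ carries the spare derivative; among your grouped terms only the quadratic piece $\nb^{\perp}m_\dlt(\Lmb)w^{\dlt}\cdot\nb w^{\dlt}$ needs the symmetrization there. So the choice of $\dot{\tld{s}}$ must be calibrated to absorb the log-loss coming from the term you propose to Gr\"onwall, not (only) from the transport term. Finally, your construction yields $\tht^{\dlt}$ only in the losing scale $H^{s_{\star}(\tau)}$; to conclude the stated membership in $L^{\infty}([0,C^{*}\dlt^{-1}];H^{s})$ with the fixed exponent $s$ one still needs the persistence of regularity for \eqref{eq:sqg-delta} at fixed $\dlt>0$ (namely that $L^{\infty}_{t}H^{s'}_{x}$ with $s'>3$ implies $L^{\infty}_{t}H^{\infty}_{x}$), which the paper imports from \cite{CCCGW}.
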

\begin{remark}
	Observe that the maximal existence time $T_{max}^{\dlt}$ of the smooth solution of \eqref{eq:sqg-delta} satisfies $T_{max}^{\dlt}\gtrsim \dlt^{-1} \rightarrow \infty$ as $\dlt \rightarrow 0^+$. Furthermore, \eqref{beh. of sol.} shows that the long-time behavior of the solution sequence for \eqref{eq:sqg-delta} is given by the solution of the Ohkitani model. 
\end{remark}

\medskip

Moreover,  {the convergence \eqref{lim: ohki} yields the \textit{global} existence of smooth solutions for the following dissipative counterpart of \eqref{eq:sqg-delta}:}
\begin{equation}\label{eq:diss-sqg-delta}
	\left\{
	\begin{aligned}
		&\rd_{t} \tht^{\dlt} + u^{\dlt} \cdot \nb \tht^{\dlt} + \kpp  \Psi\tht^{\dlt} = 0, \\
		&u^{\dlt} = \nb^{\perp} (10+\Lmb)^{-\dlt} \tht^{\dlt},
	\end{aligned}
	\right.
\end{equation}
when $\dlt>0$ is sufficiently small. Here $\Psi$  {can be any super-logarithmic Fourier multiplier in the sense that its symbol $\psi$ satisfies the lower bound}
\begin{equation}\label{eq:diss-sqg-cond}
	\begin{split}
		\psi(|\xi|) \ge \log(10+|\xi|),\quad |\xi|>\Xi_0 
	\end{split}
\end{equation} for some $\Xi_0 > 0$. 
\begin{maintheorem}\label{thm:gwp}
    Given any $\kpp>0$ and initial data $\tht_0 \in H^{s}(\Omega)$ with  {$s>4$}, there exists a unique global solution $\tht^{\dlt} \in L^{\infty}([0,\infty);H^{s}(\Omega))$ to \eqref{eq:diss-sqg-delta} for all $\dlt\in(0,\dlt^*)$, where $\dlt^*\in(0,1)$ is a constant depending only on $\kpp$, $s$, and $\nrmb{\tht_0}_{H^s}$, and satisfies
    \begin{equation*}
        \lim_{\nrmb{\tht_0}_{H^s}\rightarrow \infty}\dlt^*=0.
    \end{equation*}
\end{maintheorem}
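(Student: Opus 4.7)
The strategy combines (a) local existence for fixed $\dlt \in (0,1)$, which is standard since the dissipation only aids well-posedness, with (b) a global $H^s$ a priori bound obtained by extracting a factor of $\dlt$ from the effective nonlinearity and absorbing the resulting cubic term via the log-dissipation. Writing $(10+\Lmb)^{-\dlt} = 1 + [(10+\Lmb)^{-\dlt}-1]$ and exploiting the pointwise identity $\nb^\perp\tht^\dlt\cdot\nb\tht^\dlt \equiv 0$, one rewrites the transport as $u^\dlt\cdot\nb\tht^\dlt = v^\dlt\cdot\nb\tht^\dlt$, where $v^\dlt := \nb^\perp[(10+\Lmb)^{-\dlt}-1]\tht^\dlt$ is divergence-free and, by the elementary bound $|(10+|\xi|)^{-\dlt}-1|\le\dlt\log(10+|\xi|)$, has Fourier symbol controlled by $\dlt|\xi|\log(10+|\xi|)$. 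For $s>4$, Sobolev embedding in $\bbR^2$ then yields the crucial estimate $\nrm{\nb v^\dlt}_{L^\infty} \le C\dlt\nrm{\tht^\dlt}_{H^s}$.

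\textbf{Energy estimate.} Applying $\Lmb^s$ to \eqref{eq:diss-sqg-delta}, pairing with $\Lmb^s\tht^\dlt$, integrating by parts against the divergence-free $v^\dlt$, and invoking a Kato--Ponce type commutator estimate gives
\begin{equation*}
\tfrac12\tfrac{d}{dt}\nrm{\tht^\dlt}_{H^s}^2 + \kpp\nrm{\Psi^{1/2}\tht^\dlt}_{H^s}^2 \le C\dlt\nrm{\tht^\dlt}_{H^s}^3.
\end{equation*}
The hypothesis $\psi(|\xi|) \ge \log(10+|\xi|)$ for $|\xi|>\Xi_0$, together with the $L^2$ maximum principle $\nrm{\tht^\dlt}_{L^2}\le\nrm{\tht_0}_{L^2}$ for the transport-dissipation equation, gives $\nrm{\Psi^{1/2}\tht^\dlt}_{H^s}^2 \ge c_0\nrm{\tht^\dlt}_{H^s}^2 - C_1$, whence
\begin{equation*}
\tfrac{d}{dt}\nrm{\tht^\dlt}_{H^s}^2 + c_0\kpp\nrm{\tht^\dlt}_{H^s}^2 \le C\dlt\nrm{\tht^\dlt}_{H^s}^3 + C_1\kpp,
\end{equation*}
with $c_0$ depending on $\Xi_0$ and $C_1$ depending on $s$, $\Xi_0$, and $\nrm{\tht_0}_{L^2}$.

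\textbf{Bootstrap.} A continuity argument on the maximal time $T$ for which $\nrm{\tht^\dlt(t)}_{H^s} \le M$ closes the estimate: when the bound holds, the cubic term is $\le C\dlt M\nrm{\tht^\dlt}_{H^s}^2$, absorbed by the dissipation provided $\dlt < \dlt^* := c_0\kpp/(2CM)$, leaving $\tfrac{d}{dt}\nrm{\tht^\dlt}_{H^s}^2 + \tfrac12 c_0\kpp\nrm{\tht^\dlt}_{H^s}^2 \le C_1\kpp$, and hence $\nrm{\tht^\dlt(t)}_{H^s}^2 \le \max(\nrm{\tht_0}_{H^s}^2, 2C_1/c_0)$. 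Taking $M := 2\max(\nrm{\tht_0}_{H^s},\sqrt{2C_1/c_0})$ and $\dlt^* := \min(c_0\kpp/(2CM),1)$ correspondingly propagates the bound indefinitely and yields global existence. For large data, $M\sim\nrm{\tht_0}_{H^s}$, hence $\dlt^*\sim\kpp/\nrm{\tht_0}_{H^s}\to 0$ as $\nrm{\tht_0}_{H^s}\to\infty$, as required. Uniqueness follows from a standard $L^2$-difference estimate exploiting divergence-freeness of $u^\dlt$.

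\textbf{Main obstacle.} The most delicate point is the commutator analysis: the Kato--Ponce estimate produces, in addition to the dominant term $\nrm{\nb v^\dlt}_{L^\infty}\nrm{\tht^\dlt}_{H^s}^2 \lesssim \dlt\nrm{\tht^\dlt}_{H^s}^3$, a remainder involving $\nrm{v^\dlt}_{H^s}$, which a priori carries a loss of one derivative above $\nrm{\tht^\dlt}_{H^s}$ (with $\dlt\log$ weight). A Littlewood--Paley decomposition of $v^\dlt$ is needed to close this term: at frequencies $|\xi| \lesssim e^{1/\dlt}$ the bound $|(10+|\xi|)^{-\dlt}-1|\lesssim\dlt\log|\xi|$ keeps the contribution of size $O(\dlt)$, while at higher frequencies the log-dissipation $\kpp\nrm{\Psi^{1/2}\tht^\dlt}_{H^s}^2$ absorbs the remainder via Young's inequality. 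Verifying that this split is consistent with the $\dlt$-factor advertised in the energy inequality is the main technical content.
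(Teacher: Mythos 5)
Your overall architecture (local existence, an energy inequality with a factor of $\dlt$ in the nonlinearity, absorption by the logarithmic dissipation for $\dlt<\dlt^*\sim\kpp/\nrm{\tht_0}_{H^s}$, and a continuation/bootstrap argument) matches the paper's. But the central energy estimate, as you propose to derive it, has a genuine gap. A black-box Kato--Ponce commutator bound for $[\Lmb^{s}, v^{\dlt}\cdot\nb]\tht^{\dlt}$ produces the remainder $\nrm{\Lmb^{s}v^{\dlt}}_{L^{2}}\nrm{\nb\tht^{\dlt}}_{L^{\infty}}$, and $\nrm{\Lmb^{s}v^{\dlt}}_{L^{2}}\sim \nrm{|\xi|^{s+1}\,|1-(10+|\xi|)^{-\dlt}|\,\widehat{\tht^{\dlt}}}_{L^{2}}$ carries a \emph{full} extra derivative on $\tht^{\dlt}$. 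Your proposed Littlewood--Paley rescue does not close this: at frequencies $|\xi|\gtrsim e^{1/\dlt}$ the multiplier $1-(10+|\xi|)^{-\dlt}$ is of size $\approx 1$ (no gain of $\dlt$), and even at moderate frequencies, e.g.\ $|\xi|\sim\dlt^{-10}$, one has $|\xi|^{s+1}\dlt\log(10+|\xi|)\sim|\xi|^{s}\,\dlt^{-9}\log(1/\dlt)$, which blows up as $\dlt\to 0$. A merely logarithmic dissipation $\kpp\nrm{\log^{\frac12}(10+\Lmb)\tht^{\dlt}}_{H^{s}}^{2}$ can never absorb a term that is one full power of $|\xi|$ above $H^{s}$, on any frequency range. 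For the same reason your advertised bound $C\dlt\nrm{\tht^{\dlt}}_{H^{s}}^{3}$ (with no logarithmic weight at all) is too strong to be true; the companion illposedness results indicate the log loss is unavoidable.

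What actually closes the estimate in the paper is a cancellation that Kato--Ponce discards: the velocity is $\nb^{\perp}A\tht^{\dlt}$ with $A=I-(10+\Lmb)^{-\dlt}$ a \emph{self-adjoint} multiplier applied to the \emph{same} scalar. After the Fourier-side decomposition \eqref{eq: decomp-xi} (Lemma \ref{lem:elementary}), the dangerous terms have the form $\int \Lmb^{s}\tht\,\bigl(\nb^{\perp}\Lmb^{s}A\tht\bigr)\cdot\nb g$; one writes $A=A^{\frac12}A^{\frac12}$, moves one factor $A^{\frac12}$ across the pairing, and the leading term vanishes by antisymmetry ($\nb^{\perp}\cdot\nb g=0$, as in $H_{122}$), leaving only a commutator which Lemma \ref{lem:Comm} bounds by $\dlt^{\frac12}$ times a $\log^{\frac12}$-weighted norm. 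Combined with $|1-(10+|\xi|)^{-\dlt}|^{\frac12}\le\dlt^{\frac12}\log^{\frac12}(10+|\xi|)$ this yields $C\dlt\nrm{\tht^{\dlt}}_{H^{s}}\nrm{\log^{\frac12}(10+\Lmb)\tht^{\dlt}}_{H^{s}}^{2}$ --- a loss that is exactly logarithmic and hence absorbable by the hypothesis $\psi(|\xi|)\ge\log(10+|\xi|)$. You need this structural cancellation (or an equivalent symmetrization), not a frequency splitting; once you have the log-weighted cubic bound, your bootstrap and the asymptotics of $\dlt^{*}$ go through essentially as you wrote them.
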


\medskip

\noindent  {Lastly, we} consider the following dissipative counterpart of \eqref{eq:sqg-log}: \begin{equation}\label{eq:sqg-log-diss}
	\left\{
	\begin{aligned}
		&\rd_{t} \tht + u \cdot \nb \tht + \kpp  \log^{\beta} (10 + \Lmb)\tht = 0, \\
		&u = -\nb^{\perp} \log(10 + \Lmb) \tht.
	\end{aligned}
	\right.
\end{equation}
The following result shows that when there is a dissipative term stronger than log, there is no need to consider time-dependent Sobolev spaces. 
\begin{maintheorem}\label{thm:wp-log-diss}
	For $\beta>1$, the dissipative system \eqref{eq:sqg-log-diss} is locally well-posed in $L^\infty_{t}H^{s}_{x}$ for any $s>4$, That is, given any initial data $\tht_0 \in H^{s}(\Omg)$, $\kpp>0$, there exist $T>0$ and a unique solution $\tht\in L^\infty([0,T];H^{s}(\Omg))$ to \eqref{eq:sqg-log-diss} with initial data $\tht_0$. 
\end{maintheorem}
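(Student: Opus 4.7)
The plan is to follow the standard quasilinear strategy: construct approximate solutions via a regularization scheme (e.g., mollifying the velocity in the transport term, or applying a Galerkin/spectral truncation), for which global smooth solutions exist trivially thanks to the added dissipation; derive a priori $H^{s}$ bounds uniform in the regularization parameter; and pass to the limit via a standard compactness argument (Aubin--Lions). Uniqueness would then follow from an $L^{2}$ (or $H^{s-1}$) stability estimate for the difference of two solutions, using the Lipschitz control on $u$ extracted from the $H^{s}$ bound.

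The heart of the argument is the a priori energy inequality. Applying $\Lmb^{s}$ to \eqref{eq:sqg-log-diss}, pairing with $\Lmb^{s}\tht$ in $L^{2}$, and exploiting $\nb\cdot u=0$ to discard the leading transport contribution, I would arrive at
\begin{equation*}
\tfrac{1}{2}\tfrac{d}{dt}\nrm{\tht}_{H^{s}}^{2} + \kpp\nrm{\log^{\beta/2}(10+\Lmb)\tht}_{H^{s}}^{2} = -\langle [\Lmb^{s},u\cdot\nb]\tht,\Lmb^{s}\tht\rangle.
\end{equation*}
A Kato--Ponce-type commutator estimate bounds the right-hand side by a multiple of $(\nrm{\nb u}_{L^{\infty}}\nrm{\tht}_{H^{s}} + \nrm{u}_{H^{s}}\nrm{\nb\tht}_{L^{\infty}})\nrm{\tht}_{H^{s}}$. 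For $s>4$, Sobolev embedding combined with the sub-polynomial growth of $\log$ yields $\nrm{\nb u}_{L^{\infty}}+\nrm{\nb\tht}_{L^{\infty}} \aleq \nrm{\tht}_{H^{s}}$. The only factor that is not controlled by $\nrm{\tht}_{H^{s}}$ is $\nrm{u}_{H^{s}} \aleq \nrm{\log(10+\Lmb)\tht}_{H^{s}}$, which is morally ``one log larger''---precisely the manifestation of the log-loss that obstructed fixed-exponent well-posedness in Theorem~A.

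The decisive step, where the hypothesis $\beta>1$ enters, is to absorb this log-loss into the super-log dissipation. Since $\log^{\beta-1}(10+|\xi|)\to \infty$ as $|\xi|\to\infty$, the dissipation norm $\nrm{\log^{\beta/2}(10+\Lmb)\tht}_{H^{s}}$ strictly dominates $\nrm{\log(10+\Lmb)\tht}_{H^{s}}$ on the high-frequency region, up to a correction involving lower-order norms of $\tht$. Concretely, a Littlewood--Paley splitting at a frequency threshold $N$ (chosen depending on the instantaneous value of $\nrm{\tht}_{H^{s}}$), followed by Young's inequality with suitably-tuned exponents on the low and high pieces, should produce an estimate of the form
\begin{equation*}
\nrm{u}_{H^{s}}\nrm{\nb\tht}_{L^{\infty}}\nrm{\tht}_{H^{s}} \le \tfrac{\kpp}{2}\nrm{\log^{\beta/2}(10+\Lmb)\tht}_{H^{s}}^{2} + C_{\kpp}\, P(\nrm{\tht}_{H^{s}}),
\end{equation*}
for some polynomial $P$. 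Plugging back yields a closed differential inequality $\tfrac{d}{dt}\nrm{\tht}_{H^{s}}^{2} \le Q(\nrm{\tht}_{H^{s}})$, from which local existence follows by Gr\"onwall. The hardest part is implementing this absorption when $\beta$ is near $1$: the super-log gain is then very slow, and the frequency threshold and Young exponents must be tuned $\tht$-dependently so that the strict inequality $\beta>1$ (rather than $\beta\ge 1$) is genuinely used, and no logarithmic divergence creeps in to destroy the closed-in-time bound.
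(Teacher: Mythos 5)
There is a genuine gap, and it sits exactly in the step you flag as ``decisive.'' The absorption inequality
\begin{equation*}
\nrm{u}_{H^{s}}\nrm{\nb\tht}_{L^{\infty}}\nrm{\tht}_{H^{s}} \le \tfrac{\kpp}{2}\nrm{\log^{\beta/2}(10+\Lmb)\tht}_{H^{s}}^{2} + C_{\kpp}\, P(\nrm{\tht}_{H^{s}})
\end{equation*}
is \emph{false} for $1<\beta<2$, and no tuning of Littlewood--Paley thresholds or Young exponents can rescue it. Take $\tht=\tht_{\mathrm{low}}+\tht_{\mathrm{high}}$, where $\tht_{\mathrm{low}}$ is a fixed profile with $\nrm{\tht_{\mathrm{low}}}_{H^{s}}\sim\nrm{\nb\tht_{\mathrm{low}}}_{L^{\infty}}\sim A$ and $\tht_{\mathrm{high}}$ has Fourier support at $|\xi|\sim N$ with $\nrm{\tht_{\mathrm{high}}}_{H^{s}}=\log^{-\beta/2}(N)$. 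Then the right-hand side stays bounded as $N\to\infty$ (the dissipation norm squared is $\aleq A^{2}+1$), while already under your own identification $\nrm{u}_{H^{s}}\aleq\nrm{\log(10+\Lmb)\tht}_{H^{s}}$ the left-hand side grows like $A^{2}\log^{1-\beta/2}(N)\to\infty$. The structural reason is that $\nrm{\log(10+\Lmb)\tht}_{H^{s}}$ is an $L^{2}$-quantity with weight $\log^{2}(10+|\xi|)$, which is not dominated by the $\log^{\beta}$-weighted dissipation plus lower-order terms unless $\beta\ge 2$. (A secondary inaccuracy: since $u=-\nb^{\perp}\log(10+\Lmb)\tht$, the Kato--Ponce term is really $\nrm{u}_{H^{s}}\sim\nrm{\log(10+\Lmb)\tht}_{H^{s+1}}$, a full derivative worse than you state; taming even that derivative already requires the antisymmetry of $u\cdot\nb$ and the evenness of the multiplier, not just $\nb\cdot u=0$.) So your scheme, as written, proves the theorem only for $\beta\ge 2$.

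What makes the full range $\beta>1$ (which is sharp) work is a better bound on the nonlinear term in which the logarithmic loss never sits entirely on one factor. The paper reuses the proof of Theorem~\ref{thm:wp-log}: decompose $|\xi|^{s}$ via Lemma~\ref{lem:elementary}, kill the top-order pieces of $I$ and $II$ by symmetrizing, and --- crucially --- in the dangerous term $II$ split the multiplier as $\log^{\frac12}\cdot\log^{\frac12}$ and symmetrize one factor of $\log^{\frac12}$ between the two high frequencies $\xi$ and $\xi-\eta$, gaining a factor $|\eta|\big((10+|\xi|)^{-1}+(10+|\xi-\eta|)^{-1}\big)$. The upshot is the estimate
\begin{equation*}
\Big|\Re\int|\xi|^{s}\overline{\widehat{\tht}}(\xi)\int|\xi|^{s}\big(\widehat{u}(\xi-\eta)\cdot i\eta\,\widehat{\tht}(\eta)\big)\,\ud\eta\,\ud\xi\Big|\le C_{s}\nrm{\tht}_{H^{s}}\int\log(10+|\xi|)\,|\xi|^{2s}|\widehat{\tht}(\xi)|^{2}\,\ud\xi+C_{s}\nrm{\tht}_{H^{s}}^{3},
\end{equation*}
i.e.\ a loss of a \emph{single} power of $\log$ inside the quadratic form, which the $\kpp\log^{\beta}$ dissipation absorbs at high frequency for every $\beta>1$ (condition \eqref{eq:diss-cond}), the low frequencies feeding into the cubic term. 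You need this half-log symmetrization (or an equivalent commutator estimate of the type $[\nb^{\perp}\log^{\frac12}(10+\Lmb),\cdot]$, cf.\ Lemma~\ref{lem:Comm}); the regularization/compactness and uniqueness parts of your outline are routine, though the uniqueness estimate meets the same obstruction and is resolved by the same device.
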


\begin{remark} This result extends to more general systems of the form \begin{equation}\label{eq:diss-2}
		\left\{
		\begin{aligned}
					&\rd_{t} \tht + u \cdot \nb \tht + \kpp \Upsilon \tht = 0, \\
			&u = -\nb^{\perp} \log(10 + \Lmb) \tht,
		\end{aligned}
		\right.
	\end{equation}  where the dissipation is  given by a Fourier multiplier $\Upsilon$ with symbol $\upsilon$ satisfying  \begin{equation}\label{eq:diss-cond}
	\begin{split}
		\frac{\upsilon(|\xi|)}{\log(10+|\xi|)} \ge \frac{C_s}{\kappa}\nrm{\tht_0}_{H^s(\Omg)},\quad |\xi|>\Xi_0 
	\end{split}
\end{equation} for some $\Xi_0 > 0$ and  $C_s>0$.  {Moreover, the requirement $\bt>1$ is sharp in view of the illposedness result from \cite{CJO1} in the case $\bt\le1$.}
\end{remark}

\subsection{Previous works} \label{subsec:prev-work}

\noindent \textbf{Generalized SQG equations.} The generalized SQG equation (gSQG) takes the form \begin{equation}\label{eq:gSQG}
	\begin{split}
		&\rd_t \tht + u\cdot\nb\tht = 0, \\
		& u = \nb^\perp \Gmm \tht,
	\end{split}
\end{equation} in a two-dimensional physical domain without a boundary, where $\Gmm$ is some Fourier multiplier. The special cases $\Gmm = \Lmb^{-2}$ and $\Lmb^{-1}$ correspond respectively to the two-dimensional incompressible Euler equation and the (usual) SQG equation, derived in \cite{CMT1,CMT2}. Furthermore, the equation in the case $\Gmm = \Lmb$ governs the dynamics of the so-called electron MHD equation under certain symmetric ansatz (see \cite{CJO1,LM}). It seems that the generalized model \eqref{eq:gSQG} was first considered in \cite{CCW3}, and subsequently several authors investigated the question of well-posedness for various multipliers $\Gmm$ (\cite{CCW2,CCCGW,ElgindiA,FV,FGSV}). In the particular case of $\Gmm = \Lmb^{\beta}$, standard energy estimates give local well-posedness for $\beta \le -1$, and an additional cancellation arising from the even symmetry of $\Gmm$ is necessary to obtain local well-posedness in the case $\beta\le 0$ (\cite{CCCGW}).\footnote{This even symmetry is essential; in the works \cite{FV,FGSV}, \textit{ill-posedness} was established for odd $\Gmm$ with scaling $|\gmm(\xi)| \sim |\xi|^{\beta}$ with $-1<\beta<0$.} The latter argument can be adapted to multipliers $\Gmm$ with even symbols $\gmm$ satisfying $|\gmm(\xi)|\lesssim 1$, together with a few natural regularity assumptions on the derivatives of $\gmm$.  It turns out that this boundedness assumption is crucial for local well-posedness (\cite{CJO1}), and for this reason, we shall refer to $\Gmm$ as a \textit{singular} multiplier if its symbol $\gmm$ satisfies $|\gmm(\xi)|\rightarrow\infty$ as $|\xi|\rightarrow\infty$.

\medskip

\noindent \textbf{Global well-posedness for dissipative SQG equations.}
The dissipative SQG equations takes the form
\begin{equation}\label{eq:sqg-diss}
	\left\{
	\begin{aligned}
		&\rd_{t} \tht + u \cdot \nb \tht + \kpp(-\lap)^{\alp}\tht = 0, \\
		&u = \nb^{\perp}\Lmb^{-1} \tht
	\end{aligned}
	\right.
\end{equation}
for $\kpp>0$ and $\alp\in(0,1)$.
The cases $\alpha>\frac{1}{2}$, $\alpha=\frac{1}{2}$, and $\alpha<\frac{1}{2}$ are referred to as subcritical, critical, and supercritical, respectively. Global existence and uniqueness of smooth solutions corresponding to arbitrary smooth initial data were established for the subcritical regime in \cite{CW99}, and for the critical regime in \cite{crit1, crit2, crit3, crit4} (see also \cite{DKSV11} for the logarithmically supercritical case). In the supercritical regime, the question of global regularity was settled only for sufficiently small initial data (\cite{CL03, CC04, W04, CMZ07}). Global well-posedness or finite-time blow-up for arbitrary initial data in the supercritical case remains an outstanding open problem. In this direction, the authors of \cite{CV16} proved that the supremum of sizes of initial data leading to global well-posedness goes to infinity as $ {\alp \rightarrow \frac{1}{2}}$. Comparing this result with our Theorem \ref{thm:gwp}, both of them sacrifice some quantities to obtain global regularity. Indeed, to absorb the size of large initial data, the order of dissipation increases to the order corresponding to the critical case in \cite{CV16} while the order of velocity goes to the order corresponding to the trivial steady state in Theorem \ref{thm:gwp}. However, recalling \eqref{eq:diss-sqg-delta} and \eqref{eq:diss-sqg-cond}, Theorem \ref{thm:gwp} only requires logarithmic dissipation $\log(10+\Lmb)$, which is much weaker than $(-\lap)^{\alp}$ with $\alp\in(0,\frac12)$ in \eqref{eq:sqg-diss}.

\medskip

\noindent \textbf{Wellposedness in the Ohkitani model}. Chae, Constantin, Cordoba, Gancedo, and Wu have established a local existence and uniqueness result for the logarithmically singular SQG equations in \cite{CCCGW}. Precisely, in \cite[Theorem 1.3]{CCCGW}, the authors proved that the system \begin{equation}\label{eq:sqg-log-diss-lap}
	\left\{
	\begin{aligned}
		&\rd_{t} \tht + u \cdot \nb \tht + \kpp(-\lap)^{\alp}\tht = 0, \\
		&u = \nb^{\perp}( \log(10 - \lap))^{\mu} \tht
	\end{aligned}
	\right.
\end{equation} for any $\kpp, \mu, \alpha>0$ is locally well-posed in $L^\infty([0,T];H^{4}(\bbR^2))$. On the other hand, our Theorem \ref{thm:wp-log} says that when $\mu\le 1$, there is no need for a dissipation term for local well-posedness. Furthermore, Theorem \ref{thm:wp-log-diss} states that a logarithmic dissipation suffices; although we omit the proof, Theorem \ref{thm:wp-log-diss} can be generalized to any $\mu>0$, by simply replacing the condition \eqref{eq:diss-cond} with \begin{equation}\label{eq:diss-cond-gen}
	\begin{split}
		\frac{\upsilon(|\xi|)}{(\log(10+|\xi|))^{\mu}} \ge \frac{C_s}{\kappa}\nrm{\tht_0}_{H^s}.
	\end{split}
\end{equation}

\medskip

\noindent \textbf{Convergence to the Ohkitani model.} As we shall explain below, the key idea in the well-posedness of \eqref{eq:sqg-log} is the use of losing estimate in the scale of Sobolev spaces. Interestingly, the same strategy can be applied to the family of generalized SQG equations \eqref{eq:sqg-delta} for $\dlt>0$, this time giving \textit{long-time} existence for smooth solutions as $\dlt\to0$. Furthermore, the solutions to \eqref{eq:sqg-delta} as $\dlt\to0$ converges to the solution of \eqref{eq:sqg-log}, after a time rescaling. This confirms the expectations of Ohkitani \cite{Oh1,Oh2}. We have been also inspired by the \textit{incompressible Euler limit} of \eqref{eq:sqg-delta} (namely, $\dlt\to2$) recently considered in \cite{YZJ}. This limit is interesting as well since while the two-dimensional Euler equation is known to be globally well-posed. The authors of \cite{YZJ} actually show the long-time existence of solutions to \eqref{eq:sqg-delta} as $\dlt\to2$.

\medskip

\noindent \textbf{Illposedness in the Ohkitani model.} The above main theorems nicely complement the \textit{illposedness} results from our recent work \cite{CJO1}; among other things, one can prove a \textit{nonexistence} result for \eqref{eq:sqg-log} and \eqref{eq:sqg-log-diss}  {in a fixed Sobolev space $H^{s}$ with $s$ arbitrarily large,} in the sense that there exists a smooth initial data $\tht_0$ for which there cannot exist a corresponding solution in $L^\infty([0,T];H^{s})$ for $s>3$ with any $T>0$. The dissipative version requires a condition of the form (roughly) \begin{equation}\label{eq:diss-cond-illposed}
	\begin{split}
		\frac{\upsilon(|\xi|)}{\log(10+|\xi|)} \ll 1,
	\end{split}
\end{equation} see \cite{CJO1} for the precise statements. This shows that  {\emph{a decrease in the Sobolev exponent} (as in Theorem~\ref{thm:wp-log}) \emph{is unavoidable if one insists in having wellposedness in the $H^{s}$-spaces}.} The paper \cite{CJO1} actually proves nonexistence for a large class of singular even multipliers $\Gmm$.

\subsection{Ideas of the proof}
To understand how such a losing estimate in the scale of time-dependent Sobolev spaces is possible  {even when the time-independent estimate fails}, it is instructive to make an analogy with the well-known Cauchy--Kovalevskaya theorem, which gives local well-posedness of the Cauchy problem in the analytic class when the loss of derivative is less than or equal to one\footnote{We do not expect the analytic regularity to propagate in time for the gSQG equation \eqref{eq:gSQG} with \textit{singular} $\Gmm$; that is, Cauchy--Kovalevskaya theorem fails. Indeed, one may prove some analytic (and even Gevrey) illposedness results for linear and nonlinear SQG equations near quadratically degenerate shear steady states, closely following the arguments of \cite{CJO1,JO1}. Note that formally the loss of derivative in the gSQG equation with singular $\Gmm$ is always strictly larger than one.}. 

To be concrete, the key point in the proof of Cauchy--Kovalevskaya theorem for the model equation $\rd_t b = \Lmb b$ on $\bbR$ with initial data $b(t=0)=b_0$ is that if one considers the time-dependent multiplier $m(t,\xi) := \exp( (c_0 - Ct)|\xi|), $ (with $c_0>0$ depending on the radius of analyticity of $b_0$) then we have \begin{equation*}
\begin{split}
	\frac{\ud}{\ud t} \nrm{ m(t,\xi) \widehat{b}(\xi) }_{L^2}^2 + C \nrm{ |\xi|^{\frac12} m(t,\xi) \widehat{b}(\xi) }_{L^2}^2 = \int  m^2(t,\xi)\overline{\widehat{b}(\xi)} \rd_t\widehat{b}(\xi) \,\ud \xi 
\end{split}
\end{equation*} so that for $C>0$ large, we can absorb the right hand side into the positive term on the left hand side. Now, as a reasonable linear model of the logarithmically singular SQG equation  {whose initial value problem is illposed in a \emph{fixed} Sobolev space \cite{CJO1, JO1}}, we consider the degenerate dispersive equation \begin{equation}\label{eq:deg-dis}
	\begin{split}
		\rd_t b = x\rd_x \ln(10+\Lmb )b + \frac{1}{2}\ln(10+\Lmb )b
	\end{split}
\end{equation} on $\bbR$. More generally, we can consider \begin{equation}\label{eq:deg-dis-gen}
	\begin{split}
		\rd_t b = x\rd_x \Gmm(\Lmb )b + \frac{1}{2} \Gmm(\Lmb )b , 
	\end{split}
\end{equation} where $\Gmm$ is a multiplier with the symbol $\gmm=\gmm(|\xi|)\ge 0$ \textit{singular} in the sense that $\gmm(|\xi|)\rightarrow \infty$ as $|\xi|\rightarrow\infty$. When $\gmm$ is \textit{not} singular, one may prove a Cauchy-Kovaleskaya type theorem under some natural assumptions on the derivatives of $\gmm$. The presence of last terms in \eqref{eq:deg-dis} and \eqref{eq:deg-dis-gen} is not essential; they were inserted to just guarantee the conservation \begin{equation*}
	\begin{split}
		\frac{1}{2}\frac{\ud}{\ud t} \nrm{\gmm^{\frac{1}{2}}(\Lmb )b}_{L^2}^2= -\frac{1}{2} \nrm{\gmm^{\frac{1}{2}}(\Lmb )b}_{L^2}^2 + \frac{1}{2} \nrm{\gmm^{\frac{1}{2}}(\Lmb )b}_{L^2}^2 = 0
	\end{split}
\end{equation*} which models conservation of $\nrm{\Gmm^{\frac12}\tht}_{L^{2}}$ in \eqref{eq:gSQG}. To explore possible function spaces in which \eqref{eq:deg-dis-gen} is well-posed, we consider a time-dependent multiplier $m(t,\xi)$ (positive and increasing with $|\xi|$). Then, we compute that from \begin{equation*}
	\begin{split}
		\rd_t \widehat{b}(\xi) = -\rd_{\xi} (\xi \gmm(|\xi|) \widehat{b}) + \frac{1}{2} \gmm(|\xi|)\widehat{b}, 
	\end{split}
\end{equation*} \begin{equation*}
	\begin{split}
		\frac{1}{2}\frac{\ud}{\ud t} \int m^2(t,\xi)|\widehat{b}(t,\xi)|^2 = \int m \dot{m} |\widehat{b}|^2 + \Re \int  m^2  \,\overline{\widehat{b} }\left(-\rd_{\xi} (\xi \gmm(|\xi|) \widehat{b}) + \frac{1}{2} \gmm(|\xi|)\widehat{b} \right)
	\end{split}
\end{equation*} and \begin{equation*}
	\begin{split}
		-\Re \int  m^2  \,\overline{\widehat{b} } \rd_{\xi} (\xi \gmm(|\xi|) \widehat{b})& = \Re \int \rd_{\xi}( m^2 \, \overline{\widehat{b} } ) \xi \gmm(|\xi|) \widehat{b}   = \Re \int (2m\rd_{\xi} m  \overline{\widehat{b}} + m^2 \rd_{\xi}  \overline{\widehat{b}}  ) \xi \gmm(|\xi|)\widehat{b} \\
		& = \Re \int 2m\rd_\xi m \xi \gmm(|\xi|) |\widehat{b}|^2  -\frac{1}{2} \Re \int \rd_{\xi} ( m^2 \gmm(|\xi|)\xi ) |\widehat{b}|^2 
	\end{split}
\end{equation*} that \begin{equation*}
	\begin{split}
		\frac{1}{2}\frac{\ud}{\ud t} \int m^2(t,\xi)|\widehat{b}(t,\xi)|^2 = \int m(\xi)\left( \dot{m} + \xi \gmm(|\xi|)\rd_\xi m - m\xi\rd_\xi \gmm(|\xi|) \right)|\widehat{b}|^2 \le 0
	\end{split}
\end{equation*} if $m$ is chosen that \begin{equation}\label{eq:m-ineq}
	\begin{split}
		\dot{m} + \xi \gmm(|\xi|)\rd_\xi m - m\xi\rd_\xi \gmm(|\xi|) \le 0. 
	\end{split}
\end{equation} Define the characteristics $\dot{\Xi} = \Xi \gmm(|\Xi|), \Xi(0)=\xi $ so that the above is equivalent to \begin{equation*}
	\begin{split}
		\frac{\ud}{\ud t} m(t,\Xi(t,\xi)) \le \Xi \gmm'(|\Xi|) m(t,\Xi(t,\xi)). 
	\end{split}
\end{equation*} Hence if the characteristics $\Xi$ are well-defined for $O(1)$-time uniformly for all $\xi$, then we can take \begin{equation*}
	\begin{split}
		m(t,\Xi(t,\xi)) = m_0(\xi) \exp\left( \int_0^t  \Xi(s,\xi) \gmm'(|\Xi(s,\xi)|) ds  \right).
	\end{split}
\end{equation*} In the case of $\gmm(|\xi|) = \ln(1+|\xi|)$, we have that \eqref{eq:m-ineq} is given by $\dot{m} + \xi \ln(1+|\xi|) \rd_\xi m - m \le 0, $ and  for large $|\xi|$, \begin{equation*}
	\begin{split}
		\Xi(t,\xi) \simeq \xi^{\exp(t)}, \qquad 		m(t,\xi^{\exp(t)}) \simeq m_0(\xi) \exp(t). 
	\end{split}
\end{equation*} Hence, in the Sobolev case where $m_0(\xi) \simeq |\xi|^s$, $m(t,\xi) \simeq  |\xi|^{s\exp(-t)} \exp(t)$. That is, a losing estimate in the scale of $H^{s(t)}$-spaces is available.\footnote{In the Gevrey case where $m_0(\xi) \simeq \exp(|\xi|^{\frac{1}{\sigma}})$ for some $\sigma>0$, we have that $m(t,\xi) \simeq \exp( |\xi|^{\frac{1}{e^t\sigma }} )$. We see that a losing estimate in the Gevrey class can be obtained, but unlike the usual case, the index $\sigma$ should be sacrificed and not the radius of Gevrey regularity.}

In parallel with the above argument, if one considers the time-dependent multiplier of the form \begin{equation*}
\begin{split}
	\widetilde{m}(t,\xi) := |\xi|^{s_{0}-Ct}
\end{split}
\end{equation*}(with $s_0>0$ depending on  $\tht_0$ and $C>0$ large) for a solution $\tht$ of \eqref{eq:sqg-log}, then we have \begin{equation*}
\begin{split}
	\frac{\ud}{\ud t} \nrm{ \widetilde{m}(t,\xi) \widehat{\tht}(t,\xi) }_{L^2_\xi}^2 + C \nrm{ (\log |\xi|)^{\frac12} \widetilde{m}(t,\xi) \widehat{\tht}(t,\xi) }_{L^2_\xi}^2 =  \int  \widetilde{m}^2(t,\xi)\overline{\widehat{\tht}(t,\xi)} \rd_t\widehat{\tht}(t,\xi) \,\ud \xi 
\end{split}
\end{equation*} so that if the loss of regularity in the right hand side is \textit{sub-logarithmic}, then we may deduce  that \begin{equation*}
\begin{split}
	\frac{\ud}{\ud t} \nrm{ \widetilde{m}(t,\xi) \widehat{\tht}(t,\xi) }_{L^2_\xi}^2 \le 0
\end{split}
\end{equation*} which results in an a priori estimate. This type of observation (that is, handling a logarithmic loss of derivative with a losing estimate in the scale of Sobolev spaces) is not new. For instance, one may consider the task of solving the 2D incompressible Euler equation (SQG with $\Gmm=(-\lap)^{-1}$) with Sobolev critical initial data:\begin{equation*}
\begin{split}
	\rd_t\omg + \nb^\perp (-\lap)^{-1}\omg\cdot\nb\omg = 0. 
\end{split}
\end{equation*} When $\omg_0\in W^{s,p}$ with $sp=2$ and $1<p<\infty$, we \textit{do not} have in general $\nb \nb^\perp (-\lap)^{-1}\omg \in L^\infty$, which is necessary to propagate $W^{s,p}$-regularity. Indeed, Bourgain and Li in  \cite{BL1} has shown that (see \cite{EJ,JY2,KJ} for simplified proofs) there exist initial data $\omg_0\in W^{s,p}\cap L^\infty \cap L^{1}$ whose unique $L^\infty_{t}(L^\infty \cap L^{1})$ solution\footnote{For the 2D Euler equations with $\omg_0\in L^\infty \cap L^{1}$ initial data, Yudovich's theorem gives a unique global in time solution $\omg \in L^\infty_t(L^\infty \cap L^{1})$ (\cite{Yudovich1963a}).} \textit{instantaneously} escapes $W^{s,p}$. That is, 2D Euler is strongly illposed in critical Sobolev spaces. On the other hand, it is not difficult to see that such an illposedness behavior is logarithmic in nature; one way to quantify it is to consider the logarithmically regularized Euler equation (\cite{CCW3}), defined by \begin{equation*}
\begin{split}
	\rd_t\omg + \nb^\perp (-\lap)^{-1} (\log(10-\lap))^{-\gmm}\omg\cdot\nb\omg = 0
\end{split}
\end{equation*}  for some $\gmm>0$. It has been shown in \cite{ChWu} that the above is locally wellposed in $W^{s,p}$ for $\gmm>\frac12$. (On the other hand, there is still strong illposedness for $\gmm\le\frac12$; see \cite{Kwon}.) Since the loss is only logarithmic, one can consider a losing estimate in the scale of Sobolev spaces; indeed, with $H^1 \cap L^\infty \cap L^{1}$ initial data, it can be shown that the unique $L^\infty_t(L^\infty \cap L^{1})$ solution at time $t$ belongs to $H^{s(t)} \cap W^{1,p(t)}$ where $s,p$ are continuously decreasing functions of time with $s(0)=p(0)=2$ (\cite{EJ,Jthesis,Hung,J-JMFM}).

\subsection{Open problems} 

Given local well-posedness, the natural question is to ask  whether local solutions could blow up in finite time or not. For the inviscid equation \eqref{eq:sqg-log}, one may also ask whether there exists a solution with Sobolev regularity actually decreasing continuously with time. We plan to address these issues in the future. 

In general, not much is known regarding the long-time dynamics and the possibility of singularity formation for the gSQG equations. While the 2D Euler equation (\eqref{eq:gSQG} with $\Gmm=(-\lap)^{-1}$) is globally well-posed, this global regularity result so far only extends to models \textit{slightly} more singular than 2D Euler (by loglog in \cite{CCW3}). Several constructions of smooth solutions to gSQG equations with Sobolev norms growing with time have been obtained in \cite{Den,Den2,KN,HeKi,Z,Xu,CJ-Lamb,DEJ}. Finite-time singularity formation for a class of patch-type solutions touching the boundary was established for a range of gSQG equations in \cite{KRYZ,KYZ}.

\section{Proofs}\label{sec:wp-log}

 {We give the proofs of Theorems \ref{thm:wp-log} and \ref{thm:wp-log-diss} in \S \ref{subsec:wp-log}, and Theorems \ref{thm:long-time dynamics} and \ref{thm:gwp} in \S \ref{subsec:rela.}.} In the proofs, we shall assume that $\Omg=\bbR^2$, but the case of $\bbR\times\bbT$ and $\bbT^2$ can be treated in the same manner. 

\subsection{Local well-posedness for \eqref{eq:sqg-log} and \eqref{eq:sqg-log-diss}}\label{subsec:wp-log}
We need an elementary lemma.
\begin{lemma}\label{lem:elementary}
	For any $s\ge 3$, we have the inequality \begin{equation*}
	\begin{split}
	\left| |\xi|^s - |\eta|^s - |\xi-\eta|^s - s(\xi-\eta)\cdot\eta |\xi-\eta|^{s-2} \right| \le C_s ( |\eta|^2|\xi-\eta|^{s-2} + |\xi-\eta||\eta|^{s-1} )
	\end{split}
	\end{equation*} for any $\xi,\eta\in\bbR^2$. Here $C_s>0$ is a constant depending only on $s$. 
\end{lemma}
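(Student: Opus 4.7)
After the change of variables $\zeta := \xi - \eta$, the quantity to be estimated becomes
\[
    G(\zeta,\eta) := |\zeta+\eta|^{s} - |\eta|^{s} - |\zeta|^{s} - s|\zeta|^{s-2}\zeta\cdot\eta.
\]
The key observation is that $|\zeta|^{s} + s|\zeta|^{s-2}\zeta\cdot\eta$ is precisely the first-order Taylor polynomial of $f(x) := |x|^{s}$ at the base point $\zeta$, evaluated at $\zeta + \eta$. A direct computation gives $D^{2}f(x) = s(s-2)|x|^{s-4}\, x\otimes x + s|x|^{s-2}\, I$, so for $s \geq 3$ the Hessian obeys $|D^{2}f(x)| \leq C_{s}|x|^{s-2}$ on $\mathbb{R}^{2}\setminus\{0\}$. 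The plan is therefore to apply Taylor's theorem with integral remainder, splitting into three regimes according to the ratio $|\eta|/|\zeta|$ so that the segment of integration stays uniformly away from the origin.

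In the regime $|\eta| \leq |\zeta|/2$, the segment $\{\zeta + \tau\eta : \tau\in[0,1]\}$ lies in $\{|x| \geq |\zeta|/2\}$, so Taylor's formula gives
\[
    G(\zeta,\eta) = -|\eta|^{s} + \int_{0}^{1}(1-\tau)\, \eta^{\top} D^{2}f(\zeta+\tau\eta)\eta\, \mathrm{d}\tau,
\]
where the integral is bounded by $C_{s}|\eta|^{2}|\zeta|^{s-2}$, and $|\eta|^{s} = |\eta|^{2}|\eta|^{s-2} \leq |\eta|^{2}|\zeta|^{s-2}$; both contributions are absorbed into the first term of the asserted right-hand side. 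In the opposite regime $|\zeta| \leq |\eta|/2$ I Taylor expand about $\eta$ instead, obtaining $|\zeta+\eta|^{s} = |\eta|^{s} + s|\eta|^{s-2}\eta\cdot\zeta + R$ with $|R| \leq C_{s}|\zeta|^{2}|\eta|^{s-2} \leq C_{s}|\zeta||\eta|^{s-1}$, and then bound each of the four summands in
\[
    G(\zeta,\eta) = s|\eta|^{s-2}\eta\cdot\zeta - |\zeta|^{s} - s|\zeta|^{s-2}\zeta\cdot\eta + R
\]
by $C_{s}|\zeta||\eta|^{s-1}$; the only one requiring care is the third, for which I use $|\zeta|^{s-1} = |\zeta|^{s-2}|\zeta| \leq |\eta|^{s-2}|\zeta|$. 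Finally, in the intermediate regime $|\eta|/2 \leq |\zeta| \leq 2|\eta|$ no cancellation is needed: each of the four summands of $G$ is directly bounded by $C_{s}|\eta|^{2}|\zeta|^{s-2}$ using $|\eta|\sim|\zeta|$, and the triangle inequality closes the case.

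I do not anticipate any substantive obstacle. The only mild subtlety is that $f(x) = |x|^{s}$ fails to be twice continuously differentiable at the origin when $s$ is a non-even integer, but the case split is designed precisely to keep the relevant integration segment uniformly away from $0$, where the Hessian bound and hence the integral remainder formula are rigorous. Beyond this, only the elementary monotonicity $|a|^{p} \leq |b|^{p}$ for $|a|\leq|b|$ and $p\geq 0$ enters the proof.
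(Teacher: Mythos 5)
Your proof is correct. Each of the three regimes checks out: for $|\eta|\le\tfrac12|\xi-\eta|$ the segment from $\zeta$ to $\zeta+\eta$ stays in $\{|x|\ge|\zeta|/2\}$, the integral remainder is $O(|\eta|^2|\zeta|^{s-2})$, and $|\eta|^s\le|\eta|^2|\zeta|^{s-2}$; for $|\xi-\eta|\le\tfrac12|\eta|$ the expansion about $\eta$ and the term-by-term bounds all land in $C_s|\zeta||\eta|^{s-1}$; and in the comparable regime the crude triangle-inequality bound suffices since $|\eta|\sim|\zeta|$. (Your worry about $C^2$-regularity at the origin is in fact moot for $s\ge 3>2$, since $D^2|x|^s\to 0$ as $x\to 0$, but the case split removes the issue regardless.) The route is genuinely different from the paper's: there, the remainder $R(\xi,\eta)$ is expressed by a single exact double-integral identity, obtained by applying the fundamental theorem of calculus twice along the segments $A(\rho)=(1-\rho)\xi+\rho\eta$ and $E(\rho,\sigma)=(1-\rho)\xi+(\rho-\sigma)\eta$, and then bounded by one uniform estimate on the integrand with no case analysis. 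The paper's identity is what gets reused verbatim in the decompositions of $H_1$ and $H_2$ later in Section 2.2, which is what that formulation buys; your regime-splitting buys transparency — in particular, the cancellation that makes the remainder quadratic in $\eta$ is explicit in each regime, whereas in the paper's write-up the displayed pointwise bound on the vector $A|E|^{s-4}\eta\cdot E-\tfrac{1}{s-1}\eta|\xi-\eta|^{s-2}$ only becomes of size $O(|\eta|^2)$ after contracting with $\xi-\eta$ and integrating in $\rho,\sigma$, a point your argument never has to confront.
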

 {We note that $C_{s}$ could be taken to be independent of $s$ if we restrict $s$ to a closed interval.}
\begin{proof}
	To establish the inequality, we introduce \begin{equation*}
	\begin{split}
	A(\rho) = (1-\rho)\xi+\rho \eta, \quad B(\rho) = (1-\rho)(\xi-\eta)
	\end{split}
	\end{equation*} so that we have \begin{equation*}
	\begin{split}
	|\xi|^s - |\eta|^s = \int_0^1 \rd_\rho | A(\rho) |^s \, \ud\rho  = -s(\eta-\xi) \cdot  \int_0^1 |A(\rho)|^{s-2} A(\rho)\, \ud\rho ,
	\end{split}
	\end{equation*}\begin{equation*}
	\begin{split}
	|\xi|^s - |\eta|^s - |\xi-\eta|^s  = -s(\eta-\xi) \int_0^1 \left(|A(\rho)|^{s-2} A(\rho) - |B(\rho)|^{s-2} B(\rho) \right)\, \ud\rho.
	\end{split}
	\end{equation*} We write \begin{equation*}
	\begin{split}
	|A(\rho)|^{s-2} A(\rho) - |B(\rho)|^{s-2} B(\rho) = |B(\rho)|^{s-2} (A(\rho) -  B(\rho)) + (|A(\rho)|^{s-2}- |B(\rho)|^{s-2}) A(\rho) = I + II 
	\end{split}
	\end{equation*} and observe the following formulas:  \begin{equation*}
	\begin{split}
	-s(\eta-\xi)\int_0^1 I \, \ud\rho = \frac{s}{s-1} (\xi-\eta)\cdot \eta |\xi-\eta|^{s-2},
	\end{split}
	\end{equation*}\begin{equation*}
	\begin{split}
	|A(\rho)|^{s-2}- |B(\rho)|^{s-2} = \int_0^1 \rd_\sigma |E(\rho,\sigma)|^{s-2}\, \ud\sigma = -(s-2)\eta \cdot \int_0^1 |E|^{s-4}E \, \ud\sigma ,
	\end{split}
	\end{equation*} and \begin{equation*}
	\begin{split}
	-s(\eta-\xi) \int_0^1 II \,\ud\rho = -s(s-2)(\eta-\xi)\cdot\int_0^1\int_0^1 A(\rho) |E|^{s-4} \eta \cdot E \,\ud\sigma\,\ud\rho ,
	\end{split}
	\end{equation*} where $E (\rho,\sigma) = (1-\rho)\xi + (\rho-\sigma)\eta.$ We may therefore write \begin{equation*}
	\begin{split}
	 |\xi|^s - |\eta|^s - |\xi-\eta|^s - s(\xi-\eta)\cdot\eta |\xi-\eta|^{s-2}  & = -s(\eta-\xi) \int_0^1 II \,\ud\rho - \frac{s(s-2)}{s-1} (\xi-\eta) \cdot \eta |\xi-\eta|^{s-2} \\
	& = s(s-2)(\xi-\eta) \left[ \int_0^1 \int_0^1  A|E|^{s-4}\eta\cdot E - \frac{1}{s-1}\eta |\xi-\eta|^{s-2} \ud\sigma \ud\rho \right] .
	\end{split}
	\end{equation*} Noticing that \begin{equation*}
	\begin{split}
	\left| A|E|^{s-4}\eta\cdot E - \frac{1}{s-1}\eta |\xi-\eta|^{s-2} \right| \le C_s |\eta|^2 \left( |\xi-\eta|^{s-3} + |\eta|^{s-3} \right)
	\end{split}
	\end{equation*} holds, the proof is complete. 
\end{proof}

\begin{proof}[Proof of Theorem \ref{thm:wp-log}]
	We consider the inviscid case.  We shall perform an $H^{s}$ estimate, with $s = s(t)>4$ depending on time. For convenience, we re-define the $H^s$ norm by \begin{equation}\label{redef-H^s}
\begin{split}
\nrm{\tht}_{H^s(\bbR^2)}^2 = \int_{\bbR^2} (10+|\xi|)^{2s} |\widehat{\tht}(\xi)|^2 \, \ud\xi.
\end{split}
\end{equation} Then, we compute that \begin{equation*}
\begin{split}
\frac{1}{2}\frac{\ud}{\ud t} \nrm{\tht}_{ {H}^{s(t)}}^2 & =  \int \dot{s}(t)  (10+|\xi|)^{2s(t)} \log(10 + |\xi|) |\widehat{\tht}(\xi)|^2  \, \ud\xi \\
&\qquad  - \Re \int ( 10+|\xi|)^{2s(t)} \overline{\widehat{\tht} }(\xi) \int  \left( \widehat{u}(\xi-\eta) \cdot i\eta  \widehat{\tht}(\eta)    \right) \,\ud\eta  \, \ud\xi. 
\end{split}
\end{equation*}  Neglecting the constant 10 for now, we shall estimate the following quantity with $s = s(t)$: \begin{equation}\label{eq: decom-thmA}
\begin{split}
\Re \int  |\xi|^{ s} \overline{\widehat{\tht} }(\xi) \int  |\xi|^{ s}\left( \widehat{u}(\xi-\eta) \cdot i\eta  \widehat{\tht}(\eta)    \right) \,\ud\eta  \, \ud\xi  =: I + II + III + IV,
\end{split}
\end{equation} where the four terms are given by the decomposition \begin{equation}\label{eq: decomp-xi}
\begin{split}
|\xi|^{ s} = |\eta|^s + |\xi-\eta|^s + s(\xi-\eta)\cdot\eta |\xi-\eta|^{s-2} + R(\eta,\xi) . 
\end{split}
\end{equation} That is, we define \begin{equation*}
\begin{split}
I = \Re \int  |\xi|^{ s} \overline{\widehat{\tht} }(\xi) \int  |\eta|^{ s}\left( \widehat{u}(\xi-\eta) \cdot i\eta  \widehat{\tht}(\eta)    \right)  \,\ud\eta  \, \ud\xi ,
\end{split}
\end{equation*}\begin{equation*}
\begin{split}
II = \Re \int  |\xi|^{ s} \overline{\widehat{\tht} }(\xi) \int  |\xi-\eta|^{ s}\left( \widehat{u}(\xi-\eta) \cdot i\eta  \widehat{\tht}(\eta)    \right)  \,\ud\eta  \, \ud\xi ,
\end{split}
\end{equation*}\begin{equation*}
\begin{split}
III = \Re \int  |\xi|^{ s} \overline{\widehat{\tht} }(\xi) \int  s(\xi-\eta)\cdot\eta |\xi-\eta|^{s-2}\left( \widehat{u}(\xi-\eta) \cdot i\eta  \widehat{\tht}(\eta)    \right)  \,\ud\eta  \, \ud\xi ,
\end{split}
\end{equation*} and \begin{equation*}
\begin{split}
IV = \Re \int  |\xi|^{ s} \overline{\widehat{\tht} }(\xi) \int  R(\xi,\eta)\left( \widehat{u}(\xi-\eta) \cdot i\eta  \widehat{\tht}(\eta)  \right) \,\ud\eta  \, \ud\xi .
\end{split}
\end{equation*} To begin with,  {we claim that $I = 0$. By symmetrizing\footnote{This argument is equivalent to using the anti-symmetry of $u \cdot \nb$.} with respect to the change of variables $(\xi, \eta) = (\eta', \xi')$, and also using $\overline{\widehat{u}}(\xi-\eta) = \widehat{u}(-\xi+\eta)$ (since each component of $u$ is real-valued), we see that
\begin{equation*}
I = \frac{1}{2} \Re \int  |\xi|^{ s} \overline{\widehat{\tht} }(\xi) \int  |\eta|^{ s}\left( \widehat{u}(\xi-\eta) \cdot i(\eta-\xi)  \widehat{\tht}(\eta)    \right)  \,\ud\eta  \, \ud\xi,
\end{equation*}
which vanishes since $u$ is divergence-free (hence $\widehat{u}(\xi-\eta) \cdot i(\eta-\xi) = 0$).} Next, we claim that \begin{equation*}
\begin{split}
\left| IV \right| \le C_s \nrm{\tht}_{H^s}^3
\end{split}
\end{equation*} for $s>4$. To see this, we recall the definition of $u$ and the form of $R$ from Lemma \ref{lem:elementary}: with Young's convolution inequality, \begin{equation*}
\begin{split}
IV &\le \iint |\xi|^{ s} |\overline{\widehat{\tht} }(\xi)| ( |\eta|^2|\xi-\eta|^{s-2} + |\xi-\eta||\eta|^{s-1} ) \log(10+|\xi-\eta|)|\xi-\eta| |\widehat{\tht}(\xi-\eta)|  |\eta||\widehat{\tht}(\eta)|  \,\ud\eta  \, \ud\xi  \\
&\le \iint |\xi|^{ s} |\overline{\widehat{\tht} }(\xi)|  |\xi-\eta|^{s-1}  \log(10+|\xi-\eta|) |\widehat{\tht}(\xi-\eta)|  |\eta|^3|\widehat{\tht}(\eta)|  \,\ud\eta  \, \ud\xi \\
&\qquad + \iint |\xi|^{ s} |\overline{\widehat{\tht} }(\xi)|  \log(10+|\xi-\eta|)|\xi-\eta|^2 |\widehat{\tht}(\xi-\eta)|  |\eta|^s|\widehat{\tht}(\eta)|   \,\ud\eta  \, \ud\xi  \\
& \le C\nrm{  {\brk{\xi}^3} \widehat{\tht}(\xi) }_{L^1_\xi} \nrm{\tht}_{H^s}^2 \le C_s\nrm{\tht}_{H^s}^3.
\end{split}
\end{equation*} (Observe that $s>4$ is required in the last inequality to apply Sobolev embedding in two dimensions.)  It remains to handle the terms $II$ and $III$. To estimate $II$, we may rewrite\begin{equation*}
\begin{split}
II& = -\Re \iint |\xi|^s \overline{\widehat{\tht} }(\xi)\log^{\frac{1}{2}}(10+|\xi-\eta|)\\
&\phantom{= -\Re \iint}
\times ( \log^{\frac{1}{2}}(10+|\xi-\eta|)-\log^{\frac{1}{2}}(10+|\xi|)) \widehat{\tht}(\xi-\eta)|\xi-\eta|^{ s}  (\xi-\eta)^\perp \cdot \eta   \widehat{\tht}(\eta)    \,\ud\eta  \, \ud\xi .
\end{split}
\end{equation*}  {Here, we used }\begin{equation*}
\begin{split}
 \Re \iint |\xi|^s \overline{\widehat{\tht} }(\xi) \log^{\frac{1}{2}}(10+|\xi|)\log^{\frac{1}{2}}(10+|\xi-\eta|) \widehat{\tht}(\xi-\eta)|\xi-\eta|^{ s}  (\xi-\eta)^\perp \cdot \eta   \widehat{\tht}(\eta)  \,\ud\eta  \, \ud\xi = 0 ,
\end{split}
\end{equation*}  {which follows by symmetrizing\footnote{This argument is equivalent to using the anti-symmetry of $\log^{\frac{1}{2}}(10+\Lmb) \nb \tht \cdot \nb^{\perp} \log^{\frac{1}{2}}(10+\Lmb)$.} with respect to the change of variables $(\xi, \eta) = (\xi'-\eta', -\eta')$ and using $\overline{\widehat{\tht}}(\eta) = \widehat{\tht}(-\eta)$ (since $\tht$ is real-valued)}. We write \begin{equation*}
\begin{split}
\log^{\frac{1}{2}}(10+|\xi-\eta|)-\log^{\frac{1}{2}}(10+|\xi|) = \frac{\log(10+|\xi-\eta|)-\log(10+|\xi|)}{\log^{\frac{1}{2}}(10+|\xi-\eta|)+\log^{\frac{1}{2}}(10+|\xi|)}
\end{split}
\end{equation*} and deduce a rough estimate \begin{equation*}
\begin{split}
\left| \log^{\frac{1}{2}}(10+|\xi-\eta|)-\log^{\frac{1}{2}}(10+|\xi|) \right| \le C\left| \log \frac{10+|\xi-\eta|}{10+|\xi|} \right| \le C|\eta|\left(\frac{1}{10+|\xi|} + \frac{1}{10+|\xi-\eta|} \right) . 
\end{split}
\end{equation*} With this in hand, we can bound \begin{equation*}
\begin{split}
\left| II  \right| &\le C \iint  |\xi|^s |\widehat{\tht}(\xi)| \log^{\frac{1}{2}} (10+|\xi-\eta|) |\xi-\eta|^s |\widehat{\tht}(\xi-\eta)| |\eta|^2|\widehat{\tht}(\eta)|   \,\ud\eta  \, \ud\xi  \\
&\le C \nrm{|\eta|^2\widehat{\tht}(\eta)}_{L^1_\eta} \int \log(10+|\xi|) |\xi|^{2s} |\widehat{\tht}(\xi)|^2\, \ud\xi .
\end{split}
\end{equation*}
 {Here, we used $(\xi - \eta)^{\perp} \cdot \eta = \xi^{\perp} \cdot \eta$ to estimate $((10+|\xi|)^{-1} + (10+|\xi-\eta|)^{-1}) \abs{(\xi-\eta)^{\perp} \cdot \eta} \aleq \abs{\eta}$.}
 For $III$, it would be convenient to make a change of variables $\eta \mapsto \mu$ by $\mu = \xi-\eta$. Then, using the definition of $u$, we have that \begin{equation*}
\begin{split}
III &=-\Re \int |\xi|^{ s} \overline{\widehat{\tht} }(\xi) \int s (\mu\cdot(\xi-\mu)) (\mu^\perp \cdot (\xi-\mu)) |\mu|^{s-2} \log(10+|\mu|) \widehat{\tht}(\mu)  \widehat{\tht}(\xi-\mu)   \,\ud\mu  \, \ud\xi  \\
&= -\Re \iint |\xi|^2 \log(10+|\mu|)  G(\xi,\mu)\overline{\widehat{\tht} (\xi)}{\widehat{\tht} }(\mu)\widehat{\tht}(\xi-\mu)   \,\ud\mu  \, \ud\xi
\end{split}
\end{equation*} where $G (\xi,\mu) := s|\xi|^{s-2}|\mu|^{s-2}  (\mu\cdot(\xi-\mu)) (\mu^\perp \cdot (\xi-\mu)).$ It encourages us to consider the symmetrization \begin{equation*}
\begin{split}
III' := -\Re \iint |\xi||\mu| \log^{\frac{1}{2}}(10+|\mu|)\log^{\frac{1}{2}}(10+|\xi|)  G(\xi,\mu)\overline{\widehat{\tht} (\xi)}{\widehat{\tht} }(\mu)\widehat{\tht}(\xi-\mu)  \,\ud\mu  \, \ud\xi .
\end{split}
\end{equation*} With Young's convolution inequality  {and the simple identity $\mu^{\perp} \cdot (\xi - \mu) = (\mu - \xi)^{\perp} \cdot \xi$, we see that} \begin{equation*}
\begin{split}
\left| III' \right| &\le C \nrm{|\eta|^2\widehat{\tht}(\eta)}_{L^1_\eta} \int \log(10+|\xi|) |\xi|^{2s} |\widehat{\tht}(\xi)|^2\, \ud\xi .
\end{split}
\end{equation*} Next, $III-III'$ is given by \begin{equation*}
\begin{split}
-\Re\iint K(\xi,\mu)  G(\xi,\mu)\overline{\widehat{\tht} (\xi)}{\widehat{\tht} }(\mu)\widehat{\tht}(\xi-\mu)   \,\ud\mu  \, \ud\xi
\end{split}
\end{equation*} where \begin{equation*}
\begin{split}
|K(\xi,\mu)| &= |\xi| \log(10+|\mu|) \log^{\frac{1}{2}}(10+|\xi|) \left| \frac{|\xi|}{\log^{\frac{1}{2}}(10+|\xi|)} - \frac{|\mu|}{\log^{\frac{1}{2}}(10+|\mu|)}\right|\\
&\le C|\xi||\xi-\mu| \log(10+|\mu|) \log^{\frac{1}{2}}(10+|\xi|) \\
&\le C|\xi||\xi-\mu|( \log(10+|\xi|) + \log(10+|\xi-\mu|) ) \log^{\frac{1}{2}}(10+|\xi|).
\end{split}
\end{equation*} Observing $\nrm{\log(10+|\eta|)|\eta|^3\widehat{\tht}(\eta)}_{L^1_\eta} \le C \nrm{ {\tht}}_{H^s},$ we obtain from the above bound of $K$ that \begin{equation*}
\begin{split}
|III-III'|\le C\nrm{\tht}_{H^s}^3. 
\end{split}
\end{equation*} Together with the previous bound for $III'$, we conclude \begin{equation*}
\begin{split}
\left| III \right| &\le C \nrm{\tht}_{H^s} \int \log(10+|\xi|) |\xi|^{2s} |\widehat{\tht}(\xi)|^2\, \ud\xi   + C_s\nrm{\tht}_{H^{s }}^3. 
\end{split}
\end{equation*} Combining the estimates for $I$, $II$, $III$, and $IV$, we have\begin{equation}\label{eq:apriori-main}
\begin{split}
&\left| \Re \int  |\xi|^{ s} \overline{\widehat{\tht} }(\xi) \int  |\xi|^{ s}\left( \widehat{u}(\xi-\eta) \cdot i\eta  \widehat{\tht}(\eta)    \right)  \,\ud\eta  \, \ud\xi \right|  \le C_s \nrm{\tht}_{H^s} \int \log(10+|\xi|) |\xi|^{2s} |\widehat{\tht}(\xi)|^2   \, \ud\xi+ C_s\nrm{\tht}_{H^{s }}^3. 
\end{split}
\end{equation} We now claim that the expression\begin{equation*}
\begin{split}
V := \Re \iint (( 10+|\xi|)^{2s} - |\xi|^{2s}) \overline{\widehat{\tht} }(\xi)  \left( \widehat{u}(\xi-\eta) \cdot i\eta  \widehat{\tht}(\eta)    \right)  \,\ud\eta  \, \ud\xi
\end{split}
\end{equation*} in absolute value is bounded by the right hand side of \eqref{eq:apriori-main}. We only sketch the argument; setting\begin{equation*}
\begin{split}
Z(\xi) := \sqrt{( 10+|\xi|)^{2s} - |\xi|^{2s}}, 
\end{split}
\end{equation*} we consider \begin{equation*}
\begin{split}
V' := \Re \iint Z(\xi)Z(\xi-\eta)  \overline{\widehat{\tht} (\xi)}  \left( \widehat{u}(\xi-\eta) \cdot i\eta  \widehat{\tht}(\eta)    \right) \,\ud\eta  \, \ud\xi
\end{split}
\end{equation*} There is a highest order cancellation in $V'$, which can be obtained by symmetrizing the log in $|\xi|$ and $|\xi-\eta|$. Furthermore, the highest order term in $|\xi-\eta|$ cancels in the expression $V-V'$, giving rise to a factor of $|\eta|^3$ in the kernel. We omit the details. 

Therefore, we have arrived at the following: \begin{equation*}
\begin{split}
\frac{1}{2}  \frac{\ud}{\ud t}  \nrm{\tht}_{ {H}^{s }}^2 \le   \left(\dot{s} + C_s \nrm{\tht}_{H^s}\right) \int \log(10+|\xi|) |\xi|^{2s} |\widehat{\tht}(\xi)|^2 \, \ud\xi+ C_s\nrm{\tht}_{H^{s }}^3. 
\end{split}
\end{equation*} We set $F(t) = \nrm{\tht}_{H^{s(t)}}$ and \textit{define} $s(t)$ to be a smooth function of time such that the system of differential inequalities\footnote{ {We note that the dependence of $C_{s}$ on $s$ is minor; it can be assumed to be independent of $s$ if we work on a closed interval of values of $s$.}} \begin{equation*}
\begin{split}
&\dot{s} + C_s F \le 0 , \qquad \dot{F} \le C_s F^2,
\end{split}
\end{equation*} are satisfied for some small interval of time. (One can simply take $s = s_0 - Mt$ for some large $M>0$.) Then, this allows us to prove $ F(t) \le CF_0$ for a possibly smaller interval of time. Hence, we have proved the a priori estimate $\nrm{\tht(t,\cdot)}_{H^{s(t)}} \le C\nrm{\tht_0}_{H^{s_0}}$ in $t\in[0,T]$ for some $T = T(s_0, \nrm{\tht_0}_{H^{s_0}} )>0$, as long as $s_0>4$. 

Proving existence and uniqueness of the solution satisfying this a priori estimate is straightforward and we shall only sketch the argument. For the proof of existence, one can consider the dissipative system with a Laplacian, namely \begin{equation*}
\begin{split}
\rd_t\tht+u\cdot\nb\tht =\kappa\lap \tht .
\end{split}
\end{equation*} There exists a unique local in time smooth solution $\tht^{(\kappa)}$ with initial data $\tht_0\in H^{s_0}$ for any $\kappa>0$ (\cite{CCCGW}) which clearly satisfies the $H^{s(t)}$ a priori estimate above. Therefore, there exists $T = T(s_0, \nrm{\tht_0}_{H^{s_0}} )>0$ such that the solutions  $\tht^{(\kappa)}$ are uniformly bounded in $L^\infty([0,T];H^{4+\varepsilon})$ for some $\varepsilon>0$. Furthermore, since  $\tht^{(\kappa)}$ are uniformly bounded in $L^\infty([0,T];L^\infty)$ as well. An interpolation gives that there exists (by taking a subsequence if necessary) a strong limit $\tht^{(\kappa)}\to \tht $ in $L^\infty([0,T];H^4)$ as $\kappa\rightarrow0$. It is straightforward to verify that $\tht$ is a solution to the inviscid equation with initial data $\tht_0$. To prove uniqueness, we assume that there exist two solutions $\tht$ and $\widetilde{\tht}$ to \eqref{eq:sqg-log} with the same initial data $\tht_0$, belonging to $L^\infty([0,T];H^4)$ for some $T>0$. We set $\phi = \tht-\widetilde{\tht}$ and compute, for some $M>0$ to be chosen later, \begin{equation*}
\begin{split}
&\frac{1}{2} \frac{\ud}{\ud t} \nrm{ (10+\Lmb)^{-M t} \phi }_{L^2}^2 + M\nrm{ (10+\Lmb)^{-M t} \log^{\frac{1}{2}}(10+\Lmb) \phi }_{L^2}^2 \\
&\qquad = -\brk{ \nb^\perp\log(10+\Lmb) \tht \cdot\nb\phi,  (10+\Lmb)^{-2M t}\phi} + \brk{ \nb^\perp \log(10+ \Lmb) \phi \cdot\nb\widetilde{\tht},  (10+\Lmb)^{-2M t}\phi }\\
&\qquad =: I + II. 
\end{split}
\end{equation*} Note that \begin{equation*}
\begin{split}
I &= - \brk{ (10+\Lmb)^{- M t}(\nb^\perp\log(10+\Lmb) \tht \cdot\nb\phi),  (10+\Lmb)^{- M t}\phi}  \\
& =  - \brk{ [(10+\Lmb)^{- M t} , \nb^\perp\log(10+\Lmb) \tht \cdot\nb ]\phi ,  (10+\Lmb)^{- M t}\phi}  \\
\end{split}
\end{equation*} and with a commutator estimate for $[(10+\Lmb)^{- M t} , \nb^\perp\log(10+\Lmb) \tht \cdot\nb ]$, \begin{equation*}
\begin{split}
\left| I \right| \le C\nrm{\tht}_{H^4} \nrm{ (10+\Lmb)^{- M t}\phi }_{L^2}^2.
\end{split}
\end{equation*} Similarly, for $II$ we can derive \begin{equation*}
\begin{split}
\left| II \right| \le C\nrm{\widetilde{\tht}}_{H^4} \nrm{ (10+\Lmb)^{-M t} \log^{\frac{1}{2}}(10+\Lmb) \phi }_{L^2}^2 . 
\end{split}
\end{equation*} Taking $M = 2C\nrm{\widetilde{\tht}}_{H^4} $ gives \begin{equation*}
\begin{split}
\frac{\ud}{\ud t} \nrm{ (10+\Lmb)^{-M t} \phi }_{L^2}^2 \le C\nrm{\tht}_{H^4} \nrm{ (10+\Lmb)^{- M t}\phi }_{L^2}^2,
\end{split}
\end{equation*} or \begin{equation*}
\begin{split}
\nrm{ (10+\Lmb)^{- M t}\phi(t) }_{L^2} \le \exp(C\nrm{\tht}_{H^4}t)\nrm{ \phi_0 }_{L^2} = 0.
\end{split}
\end{equation*} This shows $\phi(t) \equiv 0$, which finishes the proof of uniqueness. \end{proof}

\begin{remark}
	This proof does not say that $C^\infty$-smooth data remains $C^\infty$-smooth for some interval of time. It is only guaranteed that for any $k$, there exists some time interval $[0,t_k]$ with $t_{k}>0$ depending on $k$ such that the solution is $C^k$-smooth in space for $t \in [0,t_k]$. 
\end{remark}

\begin{proof}[Proof of Theorem \ref{thm:wp-log-diss}]
	To establish local well-posedness in the dissipative case, one can derive an estimate of the form \begin{equation*}
\begin{split}
\frac{1}{2} \frac{\ud}{\ud t} \nrm{\tht}_{ {H}^{s }}^2 + \kappa \int \upsilon(|\xi|) (10+|\xi|)^{2s}  |\widehat{\tht}(\xi)|^2 \,\ud\xi   \le  C_s \nrm{\tht}_{H^s} \int \log(10+|\xi|) |\xi|^{2s} |\widehat{\tht}(\xi)|^2 \,\ud\xi + C_s\nrm{\tht}_{H^{s }}^3
\end{split}
\end{equation*} by arguing as in the proof of Theorem \ref{thm:wp-log} above. Under the assumption \eqref{eq:diss-cond} on $\upsilon$, the first term on the right hand side can be absorbed to the left hand side, giving \begin{equation*}
\begin{split}
\frac{1}{2} \frac{\ud}{\ud t} \nrm{\tht}_{ {H}^{s }}^2 + \frac{\kappa}{2} \int \upsilon(|\xi|) (10+|\xi|)^{2s}  |\widehat{\tht}(\xi)|^2 \,\ud\xi  \le   C_s\nrm{\tht}_{H^{s }}^3. 
\end{split}
\end{equation*} This gives the a priori estimate $\nrm{\tht(t)}_{H^s}\le C_{s}\nrm{\tht_0}_{H^s}$ for $0\le t\le T$ with $T = T(\nrm{\tht_0}_{H^s})>0$. We omit the proof of existence and uniqueness.
\end{proof}

\subsection{The relation between solutions of \eqref{eq:sqg-delta} and \eqref{eq:sqg-log}}\label{subsec:rela.}
This subsection is devoted to  {the proofs of Theorem \ref{thm:long-time dynamics} and  {\ref{thm:gwp}}}. 
Hereafter, we denote the identity operator by $I$.
Before starting the proof, we present some lemmas. The first one is the well-known Kato--Ponce commutator estimate.
\begin{lemma}[\cite{KP88}]\label{lem:K-P}
For any $\eps>0$ and $s>0$, there exists a constant $C=C_{\eps,s}>0$ depending only on $\eps$ and $s$  satisfying
\begin{equation*}
    \normbr{\left[\Lmb^s, f\right]g}
    \le C \left(\nrm{f}_{H^{2+\eps}(\bbR^2)}\normbr{\Lmb^{s-1}g}
     + \normbr{\Lmb^{s}f}\nrm{g}_{H^{1+\eps}(\bbR^2)}\right).
\end{equation*}
\end{lemma}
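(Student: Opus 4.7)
The plan is to prove this as a corollary of the classical Kato--Ponce commutator estimate
\[
\normbr{[\Lmb^s, f]g} \aleq \nrm{\nb f}_{L^\infty(\bbR^2)} \normbr{\Lmb^{s-1}g} + \normbr{\Lmb^s f}\nrm{g}_{L^\infty(\bbR^2)},
\]
and then apply two-dimensional Sobolev embedding $H^{1+\eps}(\bbR^2) \hookrightarrow L^\infty(\bbR^2)$ to both $\nb f$ (losing one derivative, so requiring $f \in H^{2+\eps}$) and $g$ (requiring $g \in H^{1+\eps}$). This last step is essentially immediate from $1+\eps > 2/2$.

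For the classical commutator estimate itself, I would use Bony's paraproduct decomposition. Write $fg = T_f g + T_g f + R(f,g)$ where $T_f g = \sum_j S_{j-3} f\, \Dlt_j g$ is the paraproduct (low-high interaction) and $R$ is the resonant (high-high) remainder. Then
\[
[\Lmb^s, f]g = [\Lmb^s, T_f]g + \Lmb^s(T_g f + R(f,g)) - \bigl(T_g(\Lmb^s f) + R(f, \Lmb^s g)\bigr),
\]
after using that $\Lmb^s$ commutes only with low-frequency multipliers up to well-controlled error. The first term $[\Lmb^s, T_f]g$ is a classical Coifman--Meyer type commutator: on each dyadic block $\Dlt_j g$, the operator $[\Lmb^s, S_{j-3}f]$ acts as a Fourier multiplier whose symbol has gradient of size $\aleq 2^{j(s-1)}\nrm{\nb f}_{L^\infty}$; Bernstein's inequality then yields an $\ell^2$-summable estimate controlled by $\nrm{\nb f}_{L^\infty}\normbr{\Lmb^{s-1}g}$. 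The remaining terms are handled similarly: the paraproduct $T_g(\Lmb^s f)$ is $L^2$-bounded by $\nrm{g}_{L^\infty}\normbr{\Lmb^s f}$ via Bernstein and Plancherel, and the resonant term $R(f,g)$ (together with its $\Lmb^s$ analogue) is the easiest piece since the diagonal structure provides extra $\ell^2$-summability, producing the same two types of bounds.

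The main obstacle in carrying this out cleanly is the resonant piece $\Lmb^s R(f,g)$, because both $f$ and $g$ could be high-frequency there; one has to split it based on which factor carries more frequency, absorbing half of the $\Lmb^s$ onto each factor and using the off-diagonal cancellation from the commutator structure. Once the $L^\infty$--based inequality is established, the upgrade to the stated $H^{2+\eps}, H^{1+\eps}$ form is a one-line application of $\nrm{\nb f}_{L^\infty} \aleq \nrm{f}_{H^{2+\eps}}$ and $\nrm{g}_{L^\infty} \aleq \nrm{g}_{H^{1+\eps}}$ in two dimensions, which determines the precise Sobolev exponents appearing in the statement.
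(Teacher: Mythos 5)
Your proposal is correct and matches what the paper does: the paper gives no proof, simply citing \cite{KP88}, and the stated form is precisely the classical Kato--Ponce commutator estimate $\nrm{[\Lmb^s,f]g}_{L^2}\aleq \nrm{\nb f}_{L^\infty}\nrm{\Lmb^{s-1}g}_{L^2}+\nrm{\Lmb^s f}_{L^2}\nrm{g}_{L^\infty}$ combined with the two-dimensional embeddings $H^{1+\eps}\hookrightarrow L^\infty$ applied to $\nb f$ and to $g$. Your paraproduct sketch of the underlying estimate is the standard route and is sound.
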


The following estimates are concerned with velocities of \eqref{eq:sqg-delta} and \eqref{eq:sqg-log}.
\begin{lemma}\label{lem:Tay}
For any $\eps>0$, $0<\dlt<1$, and $s\ge0$, there exists a constant $C=C_{\eps}>0$ depending only on $\eps$ satisfying
\begin{subequations}
\begin{align}
    \normhsr{\nabla^{\perp} \left(\dlt\log(10 + \Lmb) -I + (10+\Lmb)^{-\dlt}\right)f} &\le C \dlt^2 \nrm{f}_{H^{s+1+\eps}(\bbR^2)},\label{eq:Tay-1}\\
    \normhsr{\nabla^{\perp} \left( (10+\Lmb)^{-\dlt}-I\right)f} &\le C \dlt \nrm{f}_{H^{s+1+\eps}(\bbR^2)}.\label{eq:Tay-2}
\end{align}
\end{subequations}
\end{lemma}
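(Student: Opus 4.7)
The plan is to reduce both \eqref{eq:Tay-1} and \eqref{eq:Tay-2} to pointwise symbol estimates and then conclude via Plancherel's theorem. Introducing the nonnegative quantity $x = x(\xi) := \dlt \log(10 + |\xi|) \ge 0$, the symbol of $(10+\Lmb)^{-\dlt}$ is exactly $e^{-x}$. Consequently, the left-hand sides of \eqref{eq:Tay-1} and \eqref{eq:Tay-2} are Plancherel norms of Fourier multipliers whose symbols factor as $i \xi^{\perp}(e^{-x} - 1 + x)$ and $i \xi^{\perp}(e^{-x} - 1)$, respectively.

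The key analytic input is the pair of elementary inequalities
\begin{equation*}
  0 \le 1 - e^{-x} \le x, \qquad 0 \le x - 1 + e^{-x} \le \tfrac{x^{2}}{2}, \qquad x \ge 0,
\end{equation*}
which follow from one-line calculus arguments (differentiate once or twice and use monotonicity together with $e^{-x} \in (0,1]$). The crucial feature is that these bounds are valid \emph{globally} for $x \ge 0$, not merely for $x$ near $0$; this is what makes them effective uniformly in the frequency $\xi$, since $x \to \infty$ as $|\xi|\to\infty$ (albeit only logarithmically). Substituting back $x = \dlt \log(10 + |\xi|)$ yields the pointwise symbol bounds
\begin{equation*}
  \left| (10 + |\xi|)^{-\dlt} - 1 \right| \le \dlt \log(10 + |\xi|), \qquad \left| \dlt \log(10 + |\xi|) - 1 + (10 + |\xi|)^{-\dlt} \right| \le \tfrac{\dlt^{2}}{2} \log^{2}(10 + |\xi|).
\end{equation*}

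To conclude, I would apply Plancherel and absorb the logarithmic factors into an arbitrarily small polynomial loss via the trivial estimate $\log^{k}(10+|\xi|) \le C_{k,\eps}(10+|\xi|)^{\eps}$, valid for any $\eps>0$ and $k \in \bbN$. The additional factor of $|\xi|$ coming from $\nb^{\perp}$ accounts for the single derivative of loss appearing on the right-hand side of \eqref{eq:Tay-1} and \eqref{eq:Tay-2}. This yields \eqref{eq:Tay-2} with prefactor $\dlt$ and \eqref{eq:Tay-1} with prefactor $\dlt^{2}$.

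I do not foresee any real obstacle: the lemma is essentially the first- and second-order Taylor expansion of the analytic map $\dlt \mapsto (10+\Lmb)^{-\dlt}$ around $\dlt = 0$, carried out at the level of Fourier symbols. The only mildly subtle point is that the Taylor remainders must be controlled uniformly in high frequency, which works because the ``large-$x$'' behavior of $e^{-x}$ is bounded by $x$, so the quadratic bound $\tfrac{x^{2}}{2}$ holds trivially once $x \ge 1$, while near the origin it is the honest Taylor remainder.
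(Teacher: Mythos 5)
Your proposal is correct and follows essentially the same route as the paper: both reduce to the pointwise symbol bounds $\abs{(10+|\xi|)^{-\dlt}-1}\le \dlt\log(10+|\xi|)$ and $\abs{\dlt\log(10+|\xi|)-1+(10+|\xi|)^{-\dlt}}\le \dlt^{2}\log^{2}(10+|\xi|)$ (the paper via Taylor's theorem in $\dlt$ at $0$, you via the equivalent global inequalities for $e^{-x}$), and then conclude by Plancherel after absorbing the logarithms into $(10+|\xi|)^{\eps}$. The single extra power of $|\xi|$ from $\nb^{\perp}$ is handled identically in both arguments.
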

\begin{proof}
Plancherel's theorem gives us that
\begin{equation}\label{eq:Tay-3}
\begin{split}
    &\normhs{\nabla^{\perp} \left(\dlt\log(10 + \Lmb)-I+(10+\Lmb)^{-\dlt}\right)f}\\
    &\qquad= \normb{(1+|\xi|^2)^{\frac{s}{2}}|\xi|\left|\dlt\log(10 + |\xi|) - 1 + (10+|\xi|)^{-\dlt}\right|\hat{f}(\xi)}.
\end{split}
\end{equation}
Applying the Taylor's theorem to $(10+|\xi|)^{-\dlt}$ at $0$, we have
\begin{equation*}
\begin{split}
    \left|\dlt\log(10 + |\xi|) - 1 + (10+|\xi|)^{-\dlt}\right| &\le \dlt^2 \left|\log(10 + |\xi|)\right|^2= \dlt^2 (10+|\xi|)^{\eps}(10+|\xi|)^{-\eps}\left|\log(10 + |\xi|)\right|^2 \\
    & \le C\dlt^2 (10+|\xi|)^{\eps},
\end{split}
\end{equation*}
where in the last line, we used the fact that the maximum of $h(x):=x^{-\eps}(\log x)^2$ with $x \ge1$ is $\left(\frac{2}{\eps e}\right)^2$.
Plugging this into \eqref{eq:Tay-3}, we obtain \eqref{eq:Tay-1}.
With a similar argument, we obtain the estimate:
\begin{equation*}
    \left|(10 + |\xi|)^{-\dlt} - 1 \right| \le \dlt \left|\log(10 + |\xi|)\right| \le C\dlt|\xi| (10+|\xi|)^{\eps},
\end{equation*}
which yields \eqref{eq:Tay-2}.
\end{proof}

The next one is another type of commutator estimate.
\begin{lemma}\label{lem:Comm}
For any $\eps>0$ and $s\ge0$, there exists a constant $C=C_{\eps}>0$ depending only on $\eps$ satisfying
\begin{align*}
     &\normbr{\left[\rd_j \left(I-(10+\Lmb)^{-\dlt}\right)^{\frac{1}{2}}, f  \right]\Lmb^{s}g} \le  C\dlt^{\frac{1}{2}}\nrmb{f}_{H^{2+\eps}(\bbR^2)}\normhsr{\log^{\frac{1}{2}}(10+\Lmb)g}\quad (j=1,2).
\end{align*}
\end{lemma}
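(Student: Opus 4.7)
The plan is to work directly on the Fourier side. With $A := \rd_j (I - (10+\Lmb)^{-\dlt})^{1/2}$, the commutator reads
\begin{equation*}
\widehat{[A,f]h}(\xi) = \int_{\bbR^2} (a(\xi) - a(\eta))\,\wht{f}(\xi-\eta)\,\wht{h}(\eta)\,\ud\eta, \qquad a(\xi) := i\xi_j m(|\xi|), \qquad m(r) := (1-(10+r)^{-\dlt})^{1/2}.
\end{equation*}
I will first prove the cleaner estimate
\begin{equation*}
\nrm{[A,f]h}_{L^2} \le C_\eps\, \dlt^{1/2}\, \nrm{f}_{H^{2+\eps}(\bbR^2)}\, \nrm{\log^{1/2}(10+\Lmb)\, h}_{L^2}
\end{equation*}
for arbitrary $h$, and then deduce the lemma by taking $h = \Lmb^s g$ and using $\Lmb^s \le \brk{\Lmb}^s$ as Fourier multipliers, together with the fact that $\log^{1/2}(10+\Lmb)$ commutes with $\brk{\Lmb}^s$.

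The key technical step is the uniform symbol bound $|\nb a(\xi)| \le C\,\dlt^{1/2}\log^{1/2}(10+|\xi|)$, which in turn rests on two pointwise estimates for $m$: (i) $|m(r)| \le (\dlt\log(10+r))^{1/2}$, obtained from $1 - e^{-y} \le y$ with $y := \dlt\log(10+r)$; and (ii) the more delicate bound $|r m'(r)| \le C \dlt^{1/2}$. For (ii), starting from $r m'(r) = \tfrac{\dlt\, r(10+r)^{-\dlt-1}}{2(1-(10+r)^{-\dlt})^{1/2}}$ and parametrizing by $y$, the case $y \le 1$ uses $1 - e^{-y} \ge y/2$ to obtain $|rm'(r)| \aleq \dlt^{1/2}/\log^{1/2}(10+r)$, while the case $y \ge 1$ uses the uniform lower bound $(1-e^{-y})^{1/2} \ge (1-e^{-1})^{1/2}$ to obtain $|rm'(r)| \aleq \dlt$; in either regime this is dominated by $C\dlt^{1/2}\log^{1/2}(10+r)$. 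The mean value theorem along the segment from $\eta$ to $\xi$ then yields
\begin{equation*}
|a(\xi) - a(\eta)| \le C\,\dlt^{1/2}\,|\xi-\eta|\,\log^{1/2}(10+|\xi|+|\eta|).
\end{equation*}

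Substituting into the Fourier integral, I split $\bbR^2_\eta$ into Region I, where $|\eta| \le 2|\xi - \eta|$ (so $|\xi|+|\eta| \aleq |\xi-\eta|$ and the log weight is absorbed by the $f$-factor), and Region II, where $|\eta| > 2|\xi - \eta|$ (so $|\xi|+|\eta| \aleq |\eta|$ and the log attaches to the $h$-factor). Each piece becomes a convolution, and Young's inequality $\nrm{F*G}_{L^2}\le \nrm{F}_{L^1}\nrm{G}_{L^2}$ applies. The $L^1$-norm on the $f$-factor is then controlled by Cauchy--Schwarz against the weight $\brk{\zeta}^{-2-\eps}$; in $\bbR^2$, the weighted integral $\int \brk{\zeta}^{-4-2\eps}|\zeta|^2\log(10+|\zeta|)\,\ud\zeta$ is finite precisely when $\eps>0$, which is where the loss $H^{2+\eps}$ enters. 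Summing both regions gives the claim, and the final specialization $h = \Lmb^s g$ completes the proof. The main obstacle is the extraction of the correct power $\dlt^{1/2}$ (and not $\dlt$) in the symbol bound: using the crude estimate $|m(r)| \le 1$ would waste the smallness in $\dlt$, while the estimate $1 - e^{-y} \ge y/2$ fails for large $y$; the case split at $y = 1$ is precisely what reconciles these two regimes.
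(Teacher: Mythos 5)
Your proposal is correct and follows essentially the same route as the paper: write the commutator on the Fourier side, bound the symbol difference by the mean value theorem using the two key estimates $(1-(10+r)^{-\dlt})^{1/2}\le (\dlt\log(10+r))^{1/2}$ and $\dlt(10+r)^{-\dlt}(1-(10+r)^{-\dlt})^{-1/2}\le C\dlt^{1/2}$, and conclude with Young's convolution inequality plus Cauchy--Schwarz against $\brk{\zeta}^{-2-\eps}$. Your two-region split is only a cosmetic variant of the paper's use of the sub-additivity $\log^{1/2}(10+|\zeta+\rho\eta|)\aleq\log^{1/2}(10+|\zeta|)+\log^{1/2}(10+|\eta|)$, which distributes the logarithm between the two factors in the same way.
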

\begin{proof}
We note that
\begin{equation*}
\begin{split}
    &\normb{\left[\rd_j\left(I-(10+\Lmb)^{-\dlt}\right)^{\frac{1}{2}}, f  \right]\Lmb^{s}g} \le {\left\Vert  \int \left|\Phi_j(\xi,\eta)\right||\hat{f}(\eta)||\xi-\eta|^{s}|\hat{g}(\xi-\eta)|\, d\eta \right\Vert}_{L^2_{\xi}},
\end{split}
\end{equation*}
where
\begin{equation*}
    \Phi_j(\xi,\eta):=\xi_j\left(1-(10+|\xi|)^{-\dlt}\right)^{\frac{1}{2}} - (\xi-\eta)_j \left(1-(10+|\xi-\eta|)^{-\dlt}\right)^{\frac{1}{2}}.
\end{equation*}
By the mean value theorem, we obtain $\rho \in (0,1)$ such that
\begin{equation*}
    \begin{split}
        \left|\Phi_j(\xi,\eta)\right|
        &\le C|\eta|
        \left(\left(1-(10+|\rho \xi + (1-\rho)(\xi-\eta)|)^{-\dlt}\right)^{\frac{1}{2}}
        \right.\\
        &\left. \qquad+ \dlt(10+|\rho \xi 
        + (1-\rho)(\xi-\eta)|)^{-\dlt}\left(1-(10+|\rho \xi + (1-\rho)(\xi-\eta)|)^{-\dlt}\right)^{-\frac{1}{2}}\right) \\
        &= C|\eta|
        \left(\left(1-(10+|(\xi-\eta) + \rho\eta|)^{-\dlt}\right)^{\frac{1}{2}}\right.\\
        &\left.\qquad
        + \dlt(10+|(\xi-\eta) 
        + \rho\eta|)^{-\frac{\dlt}{2}}\left((10+|(\xi-\eta)+\rho\eta|)^{\dlt}-1\right)^{-\frac{1}{2}}\right).
    \end{split}
\end{equation*}
Using
\begin{equation*}
\begin{split}
    \left(1-(10+|(\xi-\eta) + \rho\eta|)^{-\dlt}\right)^{\frac{1}{2}}
    &\le \dlt^\frac12 \log^{\frac12}(10+|(\xi-\eta) + \rho\eta|) \le C \dlt^\frac12 \left(\log^{\frac{1}{2}}(10+|\xi-\eta|) 
    +\log^{\frac{1}{2}}(10+|\eta|)\right),
\end{split}
\end{equation*} 
\begin{equation*}
    \begin{split}
        &\dlt(10+|(\xi-\eta) 
        + \rho\eta|)^{-\frac{\dlt}{2}}\left((10+|(\xi-\eta)+\rho\eta|)^{\dlt}-1\right)^{-\frac{1}{2}}
        \le \dlt^{\frac12},
    \end{split}
\end{equation*}
we have
\begin{equation*}
    \begin{split}
        \left|\Phi_j(\xi,\eta)\right|
        &\le C\dlt^\frac12 \log^{\frac{1}{2}}(10+|\xi-\eta|)|\eta|\left(1+\log^{\frac12}(10+|\eta|)\right).
    \end{split}
\end{equation*}
Hence, Young's convolution inequality gives
\begin{equation*}
\begin{split}
    &\normb{\left[\rd_{j}\log^{\frac{1}{2}}(10+\Lmb),f\right]\Lmb^{s}g} \\ 
    &\qquad\le C \dlt^{\frac{1}{2}}{\left\Vert  \int |\eta|\left(1+\log^{\frac12}(10+|\eta|)\right)|\hat{f}(\eta)||\xi-\eta|^{s}\log^{\frac{1}{2}}(10+|\xi-\eta|)|\hat{g}(\xi-\eta)| d\eta \right\Vert}_{L^2_{\xi}} \\
    &\qquad\le C \dlt^{\frac{1}{2}}\nrmb{|\eta|\left(1+\log^{\frac12}(10+|\eta|)\right)|\hat{f}(\eta)|}_{L^{1}_{\eta}}\normhs{\log^{\frac{1}{2}}(10+\Lmb)g} \le C\dlt^{\frac{1}{2}}\nrmb{f}_{H^{2+\eps}}\normhs{\log^{\frac{1}{2}}(10+\Lmb)g}.  \qedhere 
\end{split}
\end{equation*} 
\end{proof}

Finally, we present an elementary lemma from \cite{C86}. 
\begin{lemma}\label{lem:PC}
Let $T>0$, $G>0$ be given constants and let $F(t)$ be a nonnegative continuous function on $[0,T]$. Assume further that $\nu$ be a constant satisfying
\begin{equation} \label{PC-1}
   0<8\nu TG \int_0^{T}F(t)dt\le 1.
\end{equation}
Then every solution $y(t)\ge 0$ of
\begin{equation} \label{PC-2}
    \left\{
    \begin{aligned}
    &\frac{\ud}{\ud t}y(t) \le \nu F(t) + Gy^2(t), \\
    &y(0)=0,
    \end{aligned}
    \right.
\end{equation}
is uniformly bounded on $[0,T]$ and satisfies
\begin{equation} \label{PC-3}
    y(t) \le \min \left\{\frac{3}{2TG},12\nu \int_{0}^{T}F(t)dt\right\}.
\end{equation}
\end{lemma}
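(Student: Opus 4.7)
The plan is to compare the differential inequality with an explicitly solvable nonlinear ODE. Integrating $\dot{y}(t) \le \nu F(t) + G y^{2}(t)$ from $0$ to $t$ and using $y(0) = 0$, one obtains
\begin{equation*}
	y(t) \le \nu \int_{0}^{t} F(s)\,\ud s + G \int_{0}^{t} y^{2}(s)\,\ud s \le A + G \int_{0}^{t} y^{2}(s)\,\ud s,
\end{equation*}
where I set $A := \nu \int_{0}^{T} F(s)\,\ud s$. (If $A=0$, then $F\equiv 0$ a.e., and the inequality $\dot y \le Gy^{2}$ with $y(0)=0$ forces $y\equiv 0$; so I may assume $A>0$.) Introducing
\begin{equation*}
	Z(t) := A + G \int_{0}^{t} y^{2}(s)\,\ud s,
\end{equation*}
the above reads $y(t) \le Z(t)$, and differentiating gives $\dot Z(t) = G y^{2}(t) \le G Z^{2}(t)$ with $Z(0) = A$.

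Next I would convert this into an explicit pointwise bound. Since $Z$ is nondecreasing with $Z \ge A > 0$, one can write $\frac{\ud}{\ud t}\left(\frac{1}{Z}\right) = -\frac{\dot Z}{Z^{2}} \ge -G$. Integrating from $0$ to $t$ yields $\frac{1}{Z(t)} \ge \frac{1}{A} - Gt = \frac{1-AGt}{A}$, hence
\begin{equation*}
	y(t) \le Z(t) \le \frac{A}{1 - AGt}
\end{equation*}
on any subinterval of $[0,T]$ where the right-hand side remains finite.

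Finally, the hypothesis \eqref{PC-1}, which reads $8AGT \le 1$, guarantees $AGt \le \tfrac{1}{8}$ for all $t \in [0,T]$, so $1 - AGt \ge \tfrac{7}{8}$ throughout $[0,T]$. Plugging into the preceding estimate gives
\begin{equation*}
	y(t) \le \frac{A}{7/8} = \frac{8}{7} A \le 12\, \nu \int_{0}^{T} F(s)\,\ud s,
\end{equation*}
which is the second bound in \eqref{PC-3}. Using $A \le \frac{1}{8TG}$ from the hypothesis, the same chain of inequalities yields $y(t) \le \frac{8}{7}\cdot\frac{1}{8TG} = \frac{1}{7TG} \le \frac{3}{2TG}$, which is the first bound.

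I expect no serious obstacle: the only subtle point is the comparison step $\dot Z \le G Z^{2} \Rightarrow Z(t) \le A/(1-AGt)$, and handling it via the derivative of $1/Z$ (rather than a bootstrap/continuity argument) avoids any circularity and automatically uses the positivity $Z \ge A > 0$. The loose constant ($\tfrac{8}{7}$ versus $12$) in the lemma's conclusion indicates that the stated version merely absorbs any desired slack, so the argument above is more than sufficient.
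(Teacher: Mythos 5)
Your proof is correct, and in fact yields the sharper bounds $y(t) \le \tfrac{8}{7} \nu \int_0^T F$ and $y(t) \le \tfrac{1}{7TG}$, which imply \eqref{PC-3} with room to spare. Note that the paper itself does not prove this lemma --- it is quoted from \cite{C86} without proof --- so there is no argument to compare against; your self-contained route (majorize $y$ by $Z(t) = A + G\int_0^t y^2$, observe $\dot Z \le GZ^2$, and integrate $\tfrac{\ud}{\ud t}(1/Z) \ge -G$ using $Z \ge A > 0$) is the standard Riccati comparison and is clean, avoiding any continuity/bootstrap argument. The only cosmetic remark is that your case $A = 0$ cannot occur, since the hypothesis \eqref{PC-1} already forces $\nu \int_0^T F > 0$.
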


\begin{proof}[Proof of Theorem \ref{thm:long-time dynamics}]
By Theorem \ref{thm:wp-log}, $\tht_0\in H^s$ with $s>5$ guarantees a time $T^{\log}>0$ and a unique solution $\tht\in L^{\infty}([0,T^{\log}];H^{s_0+1+\eps})$ to \eqref{eq:sqg-log} for some $s_0>4$ and $\eps>0$. Since $\tht^{\dlt}\in L^{\infty}_t H^{s}_x$ with $s>3$ implies $\tht^{\dlt}\in L^{\infty}_t H^{\infty}_x$ (see \cite{CCCGW}), it suffices to show the existence of a continuous decreasing function $s(t)>4$ with $s(0)=s_0$ defined on $[0,C^*\dlt^{-1}]$ satisfying
\begin{equation*}
    \nrmb{\tht^{\dlt}(t)-\tht(\dlt t)}_{H^{s(t)}} \le \dlt,
\end{equation*}
where $C^*>0$ is a constant to be determined later.
For simplicity, we denote \begin{equation*}
    \ttht^{\dlt}(x,t):=\tht(x,\dlt t), \qquad \text{and} \qquad \tht^{D}(x,t):=\tht^{\dlt}(x,t)-\ttht^{\dlt}(x,t).
\end{equation*}
Noticing $
    \nabla^{\perp}(10+\Lmb)^{-\dlt}\tht^{\dlt} \cdot \nabla \tht^{\dlt}= \nabla^{\perp}\left((10+\Lmb)^{-\dlt}-I\right)\tht^{\dlt} \cdot \nabla \tht^{\dlt},$
we have that 
\begin{equation*}
    \rd_t \tht^{\dlt} + \nb^{\perp}\left((10+\Lmb)^{-\dlt}-I\right)\tht^{\dlt} \cdot \nabla \tht^{\dlt}=0.
\end{equation*}
Moreover, observing $\rd_t \ttht^{\dlt} -  \nb^{\perp} \dlt\log(10 + \Lmb) \ttht^{\dlt} \cdot \nb \ttht^{\dlt} = 0,$
we obtain the equation of $\tht^{D}$:
\begin{equation*}\label{eq:D}
\begin{split}
    \rd_{t} & \tht^{D} +\nabla^{\perp}\left((10+\Lmb)^{-\dlt}-I\right)\tht^{D} \cdot \nabla \ttht^{\dlt} 
    +\nabla^{\perp}\left((10+\Lmb)^{-\dlt}-I\right)\tht^{D} \cdot \nabla \tht^{D} \\
    &+ \nabla^{\perp}\left((10+\Lmb)^{-\dlt}-I\right)\ttht^{\dlt} \cdot \nabla \tht^{D} 
    +\nabla^{\perp} \left(\dlt\log(10 + \Lmb)  -I + (10+\Lmb)^{-\dlt}\right)\ttht^{\dlt} \cdot \nabla \ttht^{\dlt}=0.
\end{split}
\end{equation*}
Re-defining the $H^s$ norm by \eqref{redef-H^s}, we have
\begin{equation*}
    \left|\frac{1}{2}\frac{\ud}{\ud t} \nrmb{\tht^D}_{H^{s(t)}}^2
    - \dot{s}(t) \nrmb{\log^{\frac{1}{2}}(10+\Lmb)\tht^D}^2_{H^{s(t)}} \right| \le
    \left|\int \tht^D \rd_t\tht^D \right| + \left| \int \Lmb^{s(t)}\tht^D \Lmb^{s(t)}\rd_t\tht^D \right|. 
\end{equation*}
To begin with, we estimate $\int \tht^D \rd_t\tht^D$. We compute that
\begin{align*}
    \int \tht^D \rd_t\tht^D &=  - \int \tht^{D}\left(\nabla^{\perp}\left((10+\Lmb)^{-\dlt}-I\right)\tht^{D} \cdot \nabla \ttht^{\dlt}\right)  - \int \tht^{D}\left(\nabla^{\perp}\left((10+\Lmb)^{-\dlt}-I\right)\tht^{D} \cdot \nabla \tht^{D}\right) \\
    &\quad  - \int \tht^{D}\left(\nabla^{\perp}\left((10+\Lmb)^{-\dlt}-I\right)\ttht^{\dlt} \cdot \nabla \tht^{D}\right) -  \int \tht^{D}\left(\nabla^{\perp} \left(\dlt\log(10 + \Lmb)  -I + \Lmb^{-\dlt}\right)\cdot\ttht^{\dlt}  \nabla \ttht^{\dlt}\right) \\
    \quad & =: L_1 + L_2 + L_3 +L_4.
\end{align*}
Using \eqref{eq:Tay-2} and the Sobolev embedding $H^{3}(\mathbb{R}^2) \hookrightarrow W^{1,\infty}(\mathbb{R}^2)$, we have
\begin{equation}\label{est: L1}
    |L_1| \le \normb{\tht^{D}}\normb{\nabla^{\perp}\left((10+\Lmb)^{-\dlt}-I\right)\tht^{D}}\normif{\nabla \ttht^{\dlt}}   \le C\dlt \nrmb{\tht^{D}}_{H^2}^2\nrmb{\ttht^{\dlt}}_{H^3}.
\end{equation}
Since $\nabla^{\perp}\left((10+\Lmb)^{-\dlt}-I\right)\tht^{\dlt}$ is divergence-free, integrating by parts gives
\begin{equation}\label{est: L23}
    L_2 + L_3 =  -\int \tht^{D}\left(\nabla^{\perp}\left((10+\Lmb)^{-\dlt}-I\right)\tht^{\dlt} \cdot \nabla \tht^{D}\right)=0.
\end{equation}
By \eqref{eq:Tay-1}, we obtain
\begin{equation}\label{est: L4}
\begin{split}
    |L_4|  &\le  \normb{\tht^{D}}\normb{\nabla^{\perp} \left(\dlt\log(10 + \Lmb)  -I + (10+\Lmb)^{-\dlt}\right)\ttht^{\dlt}} \normif{\nabla \ttht^{\dlt}} \le C \dlt^2 \normb{\tht^{D}}\nrmb{\ttht^{\dlt}}_{H^3}^2.
\end{split}
\end{equation} 
Next, we estimate $\int \Lmb^{s(t)}\tht^D \Lmb^{s(t)}\rd_t\tht^D$. We compute that
\begin{align*}
    \int \Lmb^{s}\tht^D \Lmb^{s}\rd_t\tht^D &= - \int\Lmb^{s} \tht^{D}\, \Lmb^{s}\left(\nabla^{\perp}\left((10+\Lmb)^{-\dlt}-I\right)\tht^{D} \cdot \nabla \ttht^{\dlt}\right) \\
    &\quad -\int \Lmb^{s}\tht^{D}\,\Lmb^{s}\left(\nabla^{\perp}\left((10+\Lmb)^{-\dlt}-I\right)\tht^{D} \cdot \nabla \tht^{D}\right) \\
    &\quad - \int \Lmb^{s}\tht^{D}\,\Lmb^{s}\left( \nabla^{\perp}\left((10+\Lmb)^{-\dlt}-I\right)\ttht^{\dlt} \cdot \nabla \tht^{D}\right) \\
    &\quad - \int \Lmb^{s}\tht^{D}\,\Lmb^{s}\left( \nabla^{\perp} \left(\dlt\log(10 + \Lmb) -I + (10+\Lmb)^{-\dlt}\right)\ttht^{\dlt} \cdot \nabla \ttht^{\dlt}\right) \\
    & =: H_1 + H_2 + H_3 + H_4.
\end{align*}
We claim that
\begin{equation}\label{est: H1}
\begin{split}
    |H_{1}|\le C\dlt \normhsps{\ttht^{\dlt}} \left(\nrmb{\log^{\frac{1}{2}}(10+\Lmb)\tht^D}^2_{H^{s}} 
    + \normhs{\tht^{D}}^2 \right) .
\end{split}
\end{equation}
As in \eqref{eq: decom-thmA}, we use \eqref{eq: decomp-xi} to decompose
\begin{equation*}
\begin{split}
    H_{1}
    &= -\Re \int  |\xi|^{ s} \overline{\widehat{\tht^{D}} }(\xi) \int  |\eta|^{ s}\left( (\xi-\eta)^{\perp}\left((10+|\xi-\eta|)^{-\dlt}-1 \right)\widehat{\tht^{D}}(\xi-\eta) \cdot i\eta  \widehat{\ttht^{\dlt}}(\eta)    \right)  \,\ud\eta  \, \ud\xi \\
    &\quad -\Re \int  |\xi|^{ s} \overline{\widehat{\tht^{D}} }(\xi) \int  |\xi-\eta|^{ s}\left( (\xi-\eta)^{\perp}\left((10+|\xi-\eta|)^{-\dlt}-1 \right)\widehat{\tht^{D}}(\xi-\eta) \cdot i\eta  \widehat{\ttht^{\dlt}}(\eta)    \right)  \,\ud\eta  \, \ud\xi \\
    &\quad -\Re \int  |\xi|^{ s} \overline{\widehat{\tht^{D}} }(\xi) \int  s(\xi-\eta)\cdot\eta |\xi-\eta|^{s-2}\\
    &\qquad\qquad\qquad\qquad\qquad\qquad \times \left((\xi-\eta)^{\perp}\left((10+|\xi-\eta|)^{-\dlt}-1 \right)\widehat{\tht^{D}}(\xi-\eta) \cdot i\eta  \widehat{\ttht^{\dlt}}(\eta)    \right)  \,\ud\eta  \, \ud\xi \\
    &\quad -\Re \int  |\xi|^{ s} \overline{\widehat{\tht^{D}} }(\xi) \int  R(\xi,\eta)\left( (\xi-\eta)^{\perp}\left((10+|\xi-\eta|)^{-\dlt}-1 \right)\widehat{\tht^{D}}(\xi-\eta) \cdot i\eta  \widehat{\ttht^{\dlt}}(\eta)  \right) \,\ud\eta  \, \ud\xi \\
    &=: H_{11}+H_{12}+H_{13}+H_{14}.
\end{split}
\end{equation*}
With \eqref{eq:Tay-2} and the Sobolev embedding $H^{s-2}(\bbR^2)\hookrightarrow L^{\infty}(\bbR^2) $, we estimate
\begin{equation*}
\begin{split}
|H_{11}| &\le \normb{\Lmb^{s} \tht^{D}}\normif{\nabla^{\perp}\left((10+\Lmb)^{-\dlt} -I\right)\tht^{D}} \normb{ \nabla\Lmb^s  \ttht^{\dlt}} \\
&\le C\normhs{\tht^{D}}\nrmb{\nabla^{\perp}\left((10+\Lmb)^{-\dlt} -I\right)\tht^{D}}_{H^{s-2}}\normhsps{\ttht^{\dlt}} \le C\dlt \normhsps{\ttht^{\dlt}}\normhs{\tht^{D}}^2.
\end{split}
\end{equation*}
We integrate by parts to decompose $H_{12}$ into
\begin{equation*}
\begin{split}
    H_{12} &= -\int \Lmb^{s} \tht^{D} \left(\nabla^{\perp}\Lmb^{s}\left(I-(10+\Lmb)^{-\dlt} \right)\tht^{D}\cdot  \nabla \ttht^{\dlt}\right) \\
    &=\int \Lmb^{s}\left(I-(10+\Lmb)^{-\dlt} \right)^{\frac{1}{2}}\tht^{D}\,\nabla^{\perp}\left(I-(10+\Lmb)^{-\dlt} \right)^{\frac{1}{2}}\left(\cdot  \nabla \ttht^{\dlt} \Lmb^{s} \tht^{D}\right)\\
    &= \int \Lmb^s\left(I-(10+\Lmb)^{-\dlt} \right)^{\frac{1}{2}}\tht^{D}\left[\nabla^{\perp}\left(I-(10+\Lmb)^{-\dlt} \right)^{\frac{1}{2}},\cdot\nabla \ttht^{\dlt}  \right] \Lmb^{s} \tht^{D} \\
    &\quad +\int \Lmb^s\left(I-(10+\Lmb)^{-\dlt} \right)^{\frac{1}{2}}\tht^{D} \, \nabla \ttht^{\dlt} \cdot  \nabla^{\perp}\Lmb^{s} \left(I-(10+\Lmb)^{-\dlt}\right)^{\frac{1}{2}}\tht^{D} \\
    &=H_{121}+H_{122}.
\end{split}
\end{equation*}
Using Lemma \ref{lem:Comm} and $\left|1-(10+|\xi|)^{-\dlt}\right|^{\frac{1}{2}} \le \dlt^{\frac{1}{2}} \log^{\frac{1}{2}}(10+|\xi|)$, we estimate
\begin{equation*}
\begin{split}
|H_{121}|
&\le \normb{\Lmb^s\left(I-(10+\Lmb)^{-\dlt} \right)^{\frac{1}{2}}\tht^{D}}
\normb{\left[\nabla^{\perp}\left(I-(10+\Lmb)^{-\dlt} \right)^{-\frac{1}{2}},\cdot\nabla \ttht^{\dlt}  \right] \Lmb^{s} \tht^{D}}\\
&\le C\dlt\normhs{\ttht^{\dlt}}\nrmb{\log^{\frac{1}{2}}(10+\Lmb)\tht^D}^2_{H^{s}}.
\end{split}
\end{equation*}
It follows from $\nabla^{\perp}\cdot \nabla \ttht^{\dlt}=0$ that $H_{122}=0.$ 
For $H_{13}$, we similarly further decompose
\begin{equation*}
\begin{split}
    H_{13} &= -s\int \Lmb^{s} \tht^{D}\left(\nabla\nabla^{\perp}\Lmb^{s-2}\left(I-(10+\Lmb)^{-\dlt} \right)\tht^{D}\cdot  \nabla^2 \ttht^{\dlt}\right) \\
    &= s\int \nabla\Lmb^{s-2} \left(I-(10+\Lmb)^{-\dlt} \right)^{\frac12}\tht^{D}\left[\nabla^{\perp}\left(I-(10+\Lmb)^{-\dlt} \right)^{\frac{1}{2}},\cdot\nabla^2 \ttht^{\dlt}  \right]\Lmb^s\tht^{D}  \\
    &\quad +s \int \nabla\Lmb^{s-2} \left(I-(10+\Lmb)^{-\dlt} \right)^{\frac12}\tht^{D} \, \nabla^2 \ttht^{\dlt} \cdot  \nabla^{\perp}\Lmb^{s} \left(I-(10+\Lmb)^{-\dlt}\right)^{\frac{1}{2}}\tht^{D} \\
    &=H_{131}+H_{132}.
\end{split}
\end{equation*}
With Lemma \ref{lem:Comm} and $\left|1-(10+|\xi|)^{-\dlt}\right|^{\frac{1}{2}} \le \dlt^{\frac{1}{2}} \log^{\frac{1}{2}}(10+|\xi|)$, we estimate
\begin{equation*}
\begin{split}
|H_{131}| 
&\le C\dlt\normhs{\ttht^{\dlt}}\nrmb{\log^{\frac{1}{2}}(10+\Lmb)\tht^D}^2_{H^{s}}.
\end{split}
\end{equation*}
The integration by parts yields
\begin{equation*}
    H_{132}=s \int \nabla^{\perp}\nabla\Lmb^{s-2} \left(I-(10+\Lmb)^{-\dlt} \right)^{\frac12}\tht^{D} \cdot \nabla^2 \ttht^{\dlt}   \Lmb^{s} \left(I-(10+\Lmb)^{-\dlt}\right)^{\frac{1}{2}}\tht^{D},
\end{equation*}
so that we have
\begin{equation*}
\begin{split}
|H_{132}| 
&\le \normb{\nabla^{\perp}\nabla\Lmb^{s-2} \left(I-(10+\Lmb)^{-\dlt} \right)^{\frac{1}{2}}\tht^{D}}\normif{\nabla^2 \ttht^{\dlt}}\normb{\Lmb^s\left(I-(10+\Lmb)^{-\dlt} \right)^{\frac{1}{2}}\tht^{D}}\\
&\le C\dlt\normhs{\ttht^{\dlt}}\nrmb{\log^{\frac{1}{2}}(10+\Lmb)\tht^D}^2_{H^{s}}.
\end{split}
\end{equation*}
Using $\left|(10+|\xi|)^{-\dlt}-1\right| \le \dlt \log(10+|\xi|)$, we can slightly modify the proof of the estimate of $IV$ in Theorem \ref{thm:wp-log} to obtain
\begin{equation*}
    |H_{14}| \le C\dlt\normhs{\ttht^{\dlt}}\normhs{\tht^{D}}^2.
\end{equation*}
Combining all the estimates, we have \eqref{est: H1}.  Next, we claim that
\begin{equation}\label{est: H2}
\begin{split}
    |H_{2}|\le C\dlt \normhs{\tht^{D}} \left(\nrmb{\log^{\frac{1}{2}}(10+\Lmb)\tht^D}^2_{H^{s}}
    + \normhs{\tht^{D}}^2 \right) .
\end{split}
\end{equation}
Using \eqref{eq: decomp-xi} again, we decompose
\begin{equation*}
\begin{split}
    H_{2}&= -\Re \int  |\xi|^{ s} \overline{\widehat{\tht^{D}} }(\xi) \int  |\eta|^{ s}\left( (\xi-\eta)^{\perp}\left((10+|\xi-\eta|)^{-\dlt}-1 \right)\widehat{\tht^{D}}(\xi-\eta) \cdot i\eta  \widehat{\tht^{D}}(\eta)    \right)  \,\ud\eta  \, \ud\xi \\
    &\quad -\Re \int  |\xi|^{ s} \overline{\widehat{\tht^{D}} }(\xi) \int  |\xi-\eta|^{ s}\left( (\xi-\eta)^{\perp}\left((10+|\xi-\eta|)^{-\dlt}-1 \right)\widehat{\tht^{D}}(\xi-\eta) \cdot i\eta  \widehat{\tht^{D}}(\eta)    \right)  \,\ud\eta  \, \ud\xi \\
    &\quad -\Re \int  |\xi|^{ s} \overline{\widehat{\tht^{D}} }(\xi) \int  s(\xi-\eta)\cdot\eta |\xi-\eta|^{s-2}\\
    &\qquad\qquad\qquad\qquad\qquad\qquad \times\left( (\xi-\eta)^{\perp}\left((10+|\xi-\eta|)^{-\dlt}-1 \right)\widehat{\tht^{D}}(\xi-\eta) \cdot i\eta  \widehat{\tht^{D}}(\eta)    \right)  \,\ud\eta  \, \ud\xi \\
    &\quad -\Re \int  |\xi|^{ s} \overline{\widehat{\tht^{D}} }(\xi) \int  R(\xi,\eta)\left( (\xi-\eta)^{\perp}\left((10+|\xi-\eta|)^{-\dlt}-1 \right)\widehat{\tht^{D}}(\xi-\eta) \cdot i\eta  \widehat{\tht^{D}}(\eta)  \right) \,\ud\eta  \, \ud\xi \\
    &=: H_{21}+H_{22}+H_{23}+H_{24}.
\end{split}
\end{equation*}
It is clear that $H_{21} =0$, and for the estimates of $H_{22}$, $H_{23}$, and $H_{24}$, we replace $\ttht^{\dlt}$ in the proofs of the estimates of $H_{12}$, $H_{13}$, and $H_{14}$ with $\tht^{D}$, respectively, which yields \eqref{est: H2}. Now we handle $H_3$.
Since $\nabla^{\perp}\left((10+\Lmb)^{-\dlt} -I\right)\ttht^{\dlt}$ is divergence-free, we have
\begin{equation*}
    H_{3}= 
    -\int \Lmb^{s}\tht^{D}\left[\Lmb^{s},\nabla^{\perp}\left((10+\Lmb)^{-\dlt} -I\right)\ttht^{\dlt} \cdot \right]\nabla\tht^{D},
\end{equation*}
and therefore, Lemma \ref{lem:K-P} and \eqref{eq:Tay-2} give
\begin{equation}\label{est: H3}
    \begin{split}
        |H_{3}| &\le \normb{\Lmb^{s}\tht^{D}}\normb{\left[\Lmb^{s},\nabla^{\perp}\left((10+\Lmb)^{-\dlt} -I\right)\ttht^{\dlt} \cdot \right]\nabla \tht^{D}}   \\
    &\le C\normhs{\tht^{D}}\left(\nrmb{\nabla^{\perp}\left((10+\Lmb)^{-\dlt} -I\right)\ttht^{\dlt}}_{H^{2+\eps}}
    \nrmb{\nb \tht^{D}}_{H^{s-1}} \right. \\
    &\qquad\qquad\qquad\left.+\normhs{\nabla^{\perp}\left((10+\Lmb)^{-\dlt} -I\right)\ttht^{\dlt}}
    \nrmb{\nb \tht^{D}}_{H^{1+\eps}}\right) \\
    &\le C\dlt\nrmb{\ttht^{\dlt}}_{H^{s+1+\eps}}\normhs{\tht^{D}}^2.
    \end{split}
\end{equation}
For $H_{4}$, noticing that $H^{s}(\bbR^2)$ is a Banach algebra, \eqref{eq:Tay-1} implies that
\begin{equation}\label{est: H4}
    \begin{split}
        |H_{4}| &\le \normb{\Lmb^{s}\tht^{D}}\normb{\Lmb^{s}\left(\nabla^{\perp} \left(\dlt\log(10 + \Lmb) -I + (10+\Lmb)^{-\dlt}\right)\ttht^{\dlt} \cdot \nabla \ttht^{\dlt}\right)}   \\
    &\le C \normhs{\tht^{D}}\normhs{\nabla^{\perp} \left(\dlt\log(10 + \Lmb) -I + (10+\Lmb)^{-\dlt}\right)\ttht^{\dlt}} 
    \normhs{\nabla \ttht^{\dlt}}  \\
    &\le C \dlt^2 \nrmb{\ttht^{\dlt}}_{H^{s+1+\eps}}^2
    \normhs{\tht^{D}}.
    \end{split}
\end{equation}
Summing from \eqref{est: L1} to \eqref{est: H4}, we have arrived at
\begin{equation} \label{est: Hs}
\begin{split}
    \frac{\ud}{\ud t} \nrmb{\tht^D}^2_{H^{s}}
    &\le 2\left(\dot{s}+ C\dlt\left(\nrmb{\ttht^{\dlt}}_{H^{s+1+\eps}}+\normhs{\tht^{D}}\right)\right) \nrmb{\log^{\frac{1}{2}}(10+\Lmb)\tht^D}^2_{H^{s}} \\
    &\quad +C\dlt\left(\nrmb{\ttht^{\dlt}}_{H^{s+1+\eps}}\normhs{\tht^{D}}^2 + \normhs{\tht^{D}}^3 + \dlt
    \nrmb{\ttht^{\dlt}}_{H^{s+1+\eps}}^2
    \normhs{\tht^{D}}\right).
\end{split}
\end{equation}
We now define
\begin{equation*}\label{s(t)}
    s(t):=s_0 - C\dlt\left(1+ \sup_{t\in\left[0,T^{log}\right]}\nrmb{\tht (t) }_{H^{s+1+\eps}}\right)t,
\end{equation*}
and
\begin{equation*}\label{C^*}
    C^*:=\frac{\min\left\{ {1,\,T^{log}},\,s_0-4\right\}}{100C\left(1+\sup_{t\in[0, {T^{log}}]}\nrmb{\tht(t)}_{H^{s+1+\eps}}\right)^2}
\end{equation*}
We claim that for any $\dlt \in (0,1)$,
\begin{equation}\label{est: s(t)}
     s(t)>4,
\end{equation}
and
\begin{equation}\label{est: tht^D}
    \nrmb{\tht^D(t)}_{H^{s(t)}}\le \dlt
\end{equation}
on $\left[0,C^*\dlt^{-1}\right]$.
We can check \eqref{est: s(t)} with the definitions of $s(t)$ and $C^*$.
To obtain \eqref{est: tht^D},
suppose, on the contrary, that
\begin{equation*}\label{Tbar}
    \bar{T}:=\inf{\left\{t\,:\;\nrmb{\tht^D(t)}_{H^{s(t)}}> \dlt \right\}}< C^*\dlt^{-1}.
\end{equation*}
Then since 
\begin{equation*}
    \dot{s}+ C\dlt\left(\nrmb{\ttht^{\dlt}}_{H^{s+1+\eps}}+\normhs{\tht^{D}}\right) \le 0
\end{equation*}
on $[0,\bar{T}]$,
\eqref{est: Hs} implies that
\begin{equation*}
    \frac{\ud}{\ud t} \nrmb{\tht^D}_{H^{s}} \le C\dlt\left(\nrmb{\ttht^{\dlt}}_{H^{s+1+\eps}}\normhs{\tht^{D}} + \normhs{\tht^{D}}^2 + \dlt 
    \nrmb{\ttht^{\dlt}}_{H^{s+1+\eps}}^2\right)
\end{equation*}
on $[0,\bar{T}]$.
In order to make use of Lemma \ref{lem:PC}, we multiply this by $\exp{\left(-C\dlt\int_{0}^{t} \nrmb{\ttht^{\dlt}(\tau)}_{H^{s+1+\eps}}d\tau\right)}$ and consider the quantity
\begin{equation*}
    y(t):= \normhs{\tht^{D}(t)}\exp{\left(-C\dlt\int_{0}^{t} \nrmb{\ttht^{\dlt}(\tau)}_{H^{s+1+\eps}}d\tau\right)}.
\end{equation*}
Denoting
\begin{equation*}
    G:=C\dlt\exp{\left(C\dlt\int_{0}^{\bar{T}} \nrmb{\ttht^{\dlt}(\tau)}_{H^{s+1+\eps}}d\tau\right)},
\end{equation*}
we then obtain
\begin{equation*}
    \left\{
    \begin{aligned}
    &\frac{\ud}{\ud t}y(t) \le C \dlt^2 \nrmb{\ttht^{\dlt} (t) }_{H^{s+1+\eps}}^2  + Gy^2(t), \\
    &y(0)=0
    \end{aligned}
    \right.
\end{equation*}
on $[0,\bar{T}]$.
Replacing $\nu$, $T$, and $F(t)$ in Lemma \ref{lem:PC} by $C\dlt^2$, $\bar{T}$, and $\nrmb{\ttht^{\dlt} (t) }_{H^{s+1+\eps}}^2$, respectively, we have
\begin{equation*}
\begin{split}
    8\nu TG &\int_0^{T}F(t)dt \le 8\dlt \left(C\dlt\bar{T}\sup_{t\in\left[0,T^{log}\right]}\nrmb{\tht (t) }_{H^{s+1+\eps}}\right)^2 \exp{\left(C\dlt\bar{T}\sup_{t\in\left[0,T^{log}\right]}\nrmb{\tht (t) }_{H^{s+1+\eps}}\right)} \\
    &< 8\dlt \left(CC^* \sup_{t\in\left[0,T^{log}\right]}\nrmb{\tht (t) }_{H^{s+1+\eps}}\right)^2 \exp{\left(CC^* \sup_{t\in\left[0,T^{log}\right]}\nrmb{\tht (t) }_{H^{s+1+\eps}}\right)} \le 1
\end{split}
\end{equation*}
for any $\dlt\in(0,1)$.
Then Lemma \ref{lem:PC} implies that
\begin{equation*}
\begin{split}
    \normhs{\tht^{D}(t)} 
    &\le 12C\dlt^2 \bar{T} \left(\sup_{t\in\left[0,T^{log}\right]}\nrmb{\tht (t) }_{H^{s+1+\eps}}\right)^2 \exp{\left(C\dlt\bar{T}\sup_{t\in\left[0,T^{log}\right]}\nrmb{\tht (t) }_{H^{s+1+\eps}}\right)} \le \frac{\dlt}{2}
\end{split}
\end{equation*}
for any $\dlt \in (0,1)$ and $t \in \left[0,\bar{T}\right]$, which is a contradiction to the definition of $\bar{T}$.
\end{proof}

\begin{proof}[Proof of Theorem \ref{thm:gwp}]
By the local well-posedness theory of \eqref{eq:diss-sqg-delta}, there exist a maximal existence time $T_{max}^{\dlt}>0$ and a unique solution $\tht^{\dlt}\in L^{\infty}([0,T_{max}^{\dlt});H^s)\cap L^{2}([0,T_{max}^{\dlt});\Psi^{-1}H^s)$ to \eqref{eq:diss-sqg-delta}, where $\Psi^{-1}H^s$ is a function space defined by
\begin{equation*}
    \Psi^{-1}H^s:=\left\{f: \Psi f \in H^s \right\}.
\end{equation*}
In the same manner as the proof of \eqref{est: H2}, we estimate
\begin{equation*}
    \begin{split}
        \frac{1}{2}\frac{\ud}{\ud t} \normhs{\tht^\dlt}^2 + \kpp \normhs{\Psi \tht}^2
        &\le C\dlt\normhs{\tht^{\dlt}}\left(\normhs{\log(10+\Lmb)\tht^{\dlt}}^2+\normhs{\tht^{\dlt}}^2\right)\\
        &\le C\dlt\normhs{\tht^{\dlt}}\normhs{\log(10+\Lmb)\tht^{\dlt}}^2
    \end{split}
\end{equation*}
on $[0,T_{max}^{\dlt})$.
Defining $\dlt^*:=\frac{\kpp}{10C\normhs{\tht_0}},$ 
the assumption \eqref{eq:diss-sqg-cond} implies that for any $\dlt \in (0,\dlt^*)$,
\begin{equation*}
    \begin{split}
        \frac{1}{2}\frac{\ud}{\ud t} \normhs{\tht^\dlt}^2
        &\le \left(C\dlt\normhs{\tht^{\dlt}}-\kpp\right)\normhs{\log(10+\Lmb)\tht^{\dlt}}^2  \le \kpp \left(\frac{\normhs{\tht^{\dlt}}}{10\normhs{\tht_0}}-1 \right)\normhs{\log(10+\Lmb)\tht^{\dlt}}^2
    \end{split}
\end{equation*}
on $[0,T_{max}^{\dlt})$. To obtain $T_{max}^{\dlt}=\infty$ for any $\dlt \in (0,\dlt^*)$, it suffices to show that
\begin{equation*}
    \Bar{T}:=\inf{\left\{t\,:\;\nrmb{\tht^{\dlt}(t)}_{H^{s}}> 2\normhs{\tht_0} \right\}}\ge T_{max}^{\dlt}
\end{equation*}
for any $\dlt \in (0,\dlt^*)$.
Suppose, on the contrary, that $\Bar{T}<T_{max}^{\dlt}$. Then on $[0,\Bar{T}]$, we have
\begin{equation*}
    \frac{1}{2}\frac{\ud}{\ud t} \normhs{\tht^\dlt}^2 \le -\frac{4\kpp}{5}\normhs{\log(10+\Lmb)\tht^{\dlt}}^2 \le 0,
\end{equation*}
and therefore, 
\begin{equation*}
    \normhs{\tht^{\dlt}(\Bar{T})} \le \normhs{\tht_0},
\end{equation*}
which is a contradiction to the definition of $\Bar{T}$.
As in the case of \eqref{eq:sqg-delta}, we can check that $\tht^{\dlt}\in L^{\infty}_t H^{s}_x$ with $s>3$ implies $\tht^{\dlt}\in L^{\infty}_t H^{\infty}_x$ (see \cite{CCCGW}).
\end{proof}

\section*{Declarations}

\subsection*{Acknowledgments}{D.~Chae was supported partially  by NRF grant 2021R1A2C1003234. I.-J.~Jeong has been supported by the NRF grant funded by the Korea government(MSIT)(grant No. 2022R1C1C1011051, RS-2024-00406821). S.-J.~Oh was supported by the Samsung Science and Technology Foundation under Project Number SSTF-BA1702-02, a Sloan Research Fellowship and a National Science Foundation CAREER Grant under NSF-DMS-1945615.}

\subsection*{Data availability statement} The authors declare that data sharing is not applicable to this article as no datasets were generated or analyzed during the current study.

\subsection*{Conflict of Interests/Competing Interests} The authors have no relevant conflict of interests and competing interests to disclose that are relevant to the content of this article.

\bibliographystyle{amsplain}
\providecommand{\bysame}{\leavevmode\hbox to3em{\hrulefill}\thinspace}
\providecommand{\MR}{\relax\ifhmode\unskip\space\fi MR }
\providecommand{\MRhref}[2]{%
	\href{http://www.ams.org/mathscinet-getitem?mr=#1}{#2}
}
\providecommand{\href}[2]{#2}


\end{document}